\begin{document}
\newcommand{\pa}{\partial}
\newcommand{\CI}{C^\infty}
\newcommand{\dCI}{\dot C^\infty}
\newcommand{\Hom}{\operatorname{Hom}}
\newcommand{\supp}{\operatorname{supp}}
\newcommand{\Op}{\operatorname{Op}}
\renewcommand{\Box}{\square}
\newcommand{\ep}{\epsilon}
\newcommand{\Ell}{\operatorname{Ell}}
\newcommand{\WF}{\operatorname{WF}}
\newcommand{\WFb}{\operatorname{WF}_{\bl}}
\newcommand{\WFsc}{\operatorname{WF}_{\scl}}
\newcommand{\diag}{\mathrm{diag}}
\newcommand{\sign}{\operatorname{sign}}
\newcommand{\Ker}{\operatorname{Ker}}
\newcommand{\Ran}{\operatorname{Ran}}
\newcommand{\Span}{\operatorname{Span}}
\newcommand{\sX}{\mathsf{X}}
\newcommand{\sH}{\mathsf{H}}
\newcommand{\sC}{\mathsf{C}}
\newcommand{\sk}{\mathsf{k}}
\newcommand{\codim}{\operatorname{codim}}
\newcommand{\Id}{\operatorname{Id}}
\newcommand{\id}{\operatorname{id}}
\newcommand{\cl}{{\mathrm{cl}}}
\newcommand{\piece}{{\mathrm{piece}}}
\newcommand{\bl}{{\mathrm b}}
\newcommand{\scl}{{\mathrm{sc}}}
\newcommand{\cul}{{\mathrm{cu}}}
\newcommand{\ocul}{{\mathrm{1c}}}
\newcommand{\Psib}{\Psi_\bl}
\newcommand{\Psibc}{\Psi_{\mathrm{bc}}}
\newcommand{\Psibcc}{\Psi_{\mathrm{bcc}}}
\newcommand{\Psisc}{\Psi_\scl}
\newcommand{\Psiocu}{\Psi_\ocul}
\newcommand{\Psih}{\Psi_\semi}
\newcommand{\Psihh}{\Psi_{\semi,\cF}}
\newcommand{\Psisch}{\Psi_{\scl,\semi}}
\newcommand{\Psischh}{\Psi_{\scl,\semi,\cF}}
\newcommand{\Psiocuh}{\Psi_{\ocul,\semi}}
\newcommand{\Psiocuhh}{\Psi_{\ocul,\semi,\cF}}
\newcommand{\semi}{\hbar}
\newcommand{\Diff}{\mathrm{Diff}}
\newcommand{\Diffh}{\mathrm{Diff}_{\semi}}
\newcommand{\Diffhh}{\mathrm{Diff}_{\semi,\cF}}
\newcommand{\Diffsc}{\Diff_\scl}
\newcommand{\Diffsch}{\mathrm{Diff}_{\scl,\semi}}
\newcommand{\Diffschh}{\mathrm{Diff}_{\scl,\semi,\cF}}
\newcommand{\Diffocu}{\Diff_\ocul}
\newcommand{\Diffocuh}{\mathrm{Diff}_{\ocul,\semi}}
\newcommand{\Diffocuhh}{\mathrm{Diff}_{\ocul,\semi,\cF}}
\newcommand{\PP}{\mathbb{P}}
\newcommand{\BB}{\mathbb{B}}
\newcommand{\RR}{\mathbb{R}}
\newcommand{\Cx}{\mathbb{C}}
\newcommand{\NN}{\mathbb{N}}
\newcommand{\R}{\mathbb{R}}
\newcommand{\sphere}{\mathbb{S}}
\newcommand{\codimY}{k}
\newcommand{\dimX}{n}
\newcommand{\cO}{\mathcal O}
\newcommand{\cS}{\mathcal S}
\newcommand{\cP}{\mathcal P}
\newcommand{\cF}{\mathcal F}
\newcommand{\cL}{\mathcal L}
\newcommand{\cH}{\mathcal H}
\newcommand{\cG}{\mathcal G}
\newcommand{\cU}{\mathcal U}
\newcommand{\cM}{\mathcal M}
\newcommand{\cT}{\mathcal T}
\newcommand{\cX}{\mathcal X}
\newcommand{\cY}{\mathcal Y}
\newcommand{\cR}{\mathcal R}
\newcommand{\loc}{{\mathrm{loc}}}
\newcommand{\comp}{{\mathrm{comp}}}
\newcommand{\Tb}{{}^{\bl}T}
\newcommand{\Sb}{{}^{\bl}S}
\newcommand{\Nb}{{}^{\bl}N}
\newcommand{\Tsc}{{}^{\scl}T}
\newcommand{\Ssc}{{}^{\scl}S}
\newcommand{\Vf}{\mathcal V}
\newcommand{\Vfh}{{\mathcal V}_\semi}
\newcommand{\Vfhh}{{\mathcal V}_{\semi,\cF}}
\newcommand{\Vb}{{\mathcal V}_{\bl}}
\newcommand{\Vcu}{{\mathcal V}_{\cul}}
\newcommand{\Vsc}{{\mathcal V}_{\scl}}
\newcommand{\Vsch}{{\mathcal V}_{\scl,\semi}}
\newcommand{\Vschh}{{\mathcal V}_{\scl,\semi,\cF}}
\newcommand{\Vocu}{{\mathcal V}_{\ocul}}
\newcommand{\Vocuh}{{\mathcal V}_{\ocul,\semi}}
\newcommand{\Vocuhh}{{\mathcal V}_{\ocul,\semi,\cF}}
\newcommand{\Lambdasc}{{}^{\scl}\Lambda}
\newcommand{\etat}{\tilde\eta}
\newcommand{\xt}{\tilde x}
\newcommand{\scH}{{}^{\scl}H}
\newcommand{\Hh}{H_{\semi}}
\newcommand{\Hhh}{H_{\semi,\cF}}
\newcommand{\Hsc}{H_{\scl}}
\newcommand{\Hschh}{H_{\scl,\semi,\cF}}
\newcommand{\Hsch}{H_{\scl,\semi}}
\newcommand{\Hscloc}{H_{\scl,\loc}}
\newcommand{\Hscd}{\dot H_{\scl}}
\newcommand{\Hscb}{\bar H_{\scl}}
\newcommand{\Hocu}{H_{\ocul}}
\newcommand{\Hocuh}{H_{\ocul,\semi}}
\newcommand{\ff}{{\mathrm{ff}}}
\newcommand{\inter}{{\mathrm{int}}}
\newcommand{\Sym}{\mathrm{Sym}}
\newcommand{\be}[1]{\begin{equation}\label{#1}}
\newcommand{\ee}{\end{equation}}

\newcommand{\xisc}{\xi_{\scl}}
\newcommand{\etasc}{\eta_{\scl}}
\newcommand{\xiocu}{\xi_{\ocul}}
\newcommand{\etaocu}{\eta_{\ocul}}

\newcommand{\cutsphere}{\mathrm{PS}}

\newcommand{\level}{\mathsf{c}}
\newcommand{\foliation}{\mathsf{x}}
\newcommand{\loccoord}{y}
\newcommand{\Foliation}{\mathsf{X}}
\newcommand{\Loccoord}{\mathsf{Y}}

\newcommand{\bo}{\partial M} 
\newcommand{\zero}{^{(0)}}
\renewcommand{\r}[1]{(\ref{#1})} 
\newcommand{\mat}[4]{\left(\begin{array}{cc} #1 &#2\\#3 & #4 
\end{array}\right)}
\renewcommand{\d}{\mathrm{d}} 
\newcommand{\dsymm}{\mathrm{d}^{\mathrm{s}}}
\newcommand{\dsymmw}{\mathrm{d}^{\mathrm{s}}_\digamma}
\newcommand{\dsymmY}{\mathrm{d}^{\mathrm{s}}_Y}
\newcommand{\dsymmsc}{\mathrm{d}^{\mathrm{s}}_\scl}
\newcommand{\dsymmscw}{\mathrm{d}^{\mathrm{s}}_{\scl,\digamma}}

\setcounter{secnumdepth}{3}
\newtheorem{lemma}{Lemma}[section]
\newtheorem{prop}{Proposition}[section]
\newtheorem{proposition}{Proposition}[section]
\newtheorem{thm}{Theorem}[section]
\newtheorem{cor} {Corollary}[section]
\newtheorem{result}[lemma]{Result}
\newtheorem*{thm*}{Theorem}
\newtheorem*{prop*}{Proposition}
\newtheorem*{cor*}{Corollary}
\newtheorem*{conj*}{Conjecture}
\numberwithin{equation}{section}
\theoremstyle{remark}
\newtheorem{rem}{Remark}[section]
\newtheorem{remark} {Remark}[section]
\newtheorem*{rem*}{Remark}
\theoremstyle{definition}
\newtheorem{Def}{Definition}[section]
\newtheorem*{Def*}{Definition}

\newcommand{\mar}[1]{{\marginpar{\sffamily{\scriptsize #1}}}}
\newcommand\av[1]{}

\title[Semiclassical approach to geometric X-ray transforms]{A
  semiclassical approach to geometric X-ray transforms in the presence of
convexity}

\author[Andras Vasy]{Andr\'as Vasy}
\thanks{The author gratefully acknowledges support from the National
  Science Foundation under grant number  DMS-1664683 and DMS-1953987.}

\date{\today}

\address{Department of Mathematics, Stanford University, Stanford, CA
94305-2125, U.S.A.}
\email{andras@math.stanford.edu}
\subjclass{53C65, 35R30, 35S05}

\begin{abstract}
In this short paper we introduce a variant of the approach to
inverting the X-ray transform that originated in the author's work
with Uhlmann. The new method is based on semiclassical analysis and
eliminates the need for using sufficiently small domains and layer
stripping for obtaining the injectivity and stability results,
assuming natural geometric conditions are satisfied.
\end{abstract} 

\maketitle

\section{Introduction}

In this short paper we introduce a variant of the approach to
inverting the X-ray transform that originated in the author's work
with Uhlmann \cite{Uhlmann-Vasy:X-ray}. Here recall that on a compact
Riemannian manifold with boundary $M$ the X-ray
transform is the map $I$ that assigns to each $f\in\CI(M)$ the
function $If$ on $SM$, the unit
sphere bundle, defined by
$$
(If)(\beta)=\int_{\gamma_\beta} f(\gamma_\beta(t))\,dt,
$$
where $\gamma_\beta$ is the geodesic whose lift to $SM$ goes through $\beta$. (Other
similar families of curves work equally well, as observed by H.~Zhou
in the appendix to \cite{Uhlmann-Vasy:X-ray}. There is also no need to
consider $SM$ the {\em unit} sphere bundle; indeed it is convenient to
consider $If$ defined on $TM\setminus o$ as a homogeneous function of
degree $-1$. Also, compactness can be relaxed.) Here the geodesics are
assumed to be sufficiently well-behaved so that the integrals are over
finite intervals, i.e.\ the geodesics reach the boundary in finite
affine parameter; the more strict requirements later on make this
automatic. It is also useful to consider $M$ as a smooth domain in a manifold
with boundary $\tilde M$; in this case we regard $f$ as a function
supported in $M$ (via extension by $0$). The inverse problem is
to recover $f$ from $If$, i.e.\ to construct a left inverse, or at
least show that $I$ is injective with suitable stability
estimates. Typically one approaches such problems by considering the
normal operator $I^*I$, or some modification. In the present context
(using the above over-parameterization, in that many $\beta$
correspond to the same geodesic) $I^*$ is replaced by a closely
related operator of
the form
$$
(Lv)(z)= \int_{S_z M} v(\beta)\,d\nu(\beta)
$$
of integration along the geodesics through $z$; ideally one would like
$LI$ invertible, at least up to `trivial' errors.

The key idea of \cite{Uhlmann-Vasy:X-ray} was to introduce an
{\em artificial boundary}, which is a hypersurface in $M$, which meant that rather than working on all
of $M$, one initially attempts to recover $f$ from $If$ in the region
on one side of this hypersurface. More precisely, one is working with
a family of hypersurfaces which are level sets of a function $\xt$. This
function $\xt$ is required to be strictly concave from the side of the
super-level sets, i.e.\ $\frac{d(\xt\circ\gamma_\beta)}{dt}(t_0)=0$ implies $\frac{d^2(\xt\circ\gamma_\beta)}{dt^2}(t_0)>0$. If $\xt$ is normalized so that $M$ is contained in
$\xt\leq 0$, then the main result of \cite{Uhlmann-Vasy:X-ray} was
invertibility, in the above sense, in a region $\xt\geq -c$, where $c>0$
was sufficiently small, depending on various analytic quantities. Here
$\xt=-c$ is the artificial boundary, and the strict concavity is in
fact only
required in $\xt\geq -c$. This
could be repeated in a layer stripping argument, allowing a global
result after a multi-step process. The analytic heart of this
artificial boundary argument involves Melrose's {\em algebra of scattering
pseudodifferential operators} \cite{RBMSpec}; while the artificial
boundary is at a geometrically finite place, the analytic way of
obtaining a modified `normal operator' effectively pushes it to
infinity. This is done by, most crucially (another modification is
also needed), inserting a localizer $\chi(\beta)$ into the
formula for $L$ that concentrates on geodesics almost tangential to the level
sets of $\xt$, with the approximate tangency becoming more strict as
$\xt$ approaches the artificial boundary, i.e.\ as $x=\xt+c\to 0+$; the precise way this happens
determines the analytic structure:
$$
(Lv)(z)= \int_{S_z M} \chi(\beta)v(\beta)\,d\nu(\beta).
$$

The new method introduced in this paper is based on {\em semiclassical analysis} and
eliminates the need for using sufficiently small domains and layer
stripping for obtaining the injectivity and stability results,
assuming natural geometric conditions are satisfied. It
applies both to the localized problems (in the sense of the artificial
boundary), in which case it still uses
Melrose's scattering algebra \cite{RBMSpec}, and to global problems, in
which case one uses a variant of the standard semiclassical
algebra. As injectivity or stability statements, the results are the same as one would obtain with the original
techniques of \cite{Uhlmann-Vasy:X-ray}, but with a more transparent and streamlined proof. This is
reflected by the stronger technical theorems on the modified normal operator
for the X-ray transform. The analytic heart of the approach is again to introduce a
localizer $\chi_h$, which now also depends on $h$; for $h$ small
this again localizes very close to geodesics tangential to level sets
of $\xt$:
$$
(L_hv)(z)= \int_{S_z M} \chi_h(\beta)v(\beta)\,d\nu(\beta).
$$

Concretely, on manifolds of dimension $\geq 3$, in the case of no conjugate points but with a convex
foliation still, we directly obtain a
modified normal operator that is invertible; this involves the use of
a {\em semiclassical foliation pseudodifferential algebra} (the aforementioned variant), but not the scattering algebra, and
it also eliminates the need for making small steps (thin layers) in
the layer stripping approach. One in fact needs a weaker requirement
on the lack of conjugate points for curves from point of tangency to the foliation,
which in dimension $> 3$ can be further weakened similarly to the work of
Stefanov and Uhlmann \cite{Stefanov-Uhlmann:Integral}. Since in this
case there is no need to renormalize $\xt$ as there is no artificial
boundary, in order to simplify the notation we write $x=\xt$; this
allows a notationally uniform treatment later.

\begin{thm}\label{thm:main}
  Suppose $M$ is a compact Riemannian manifold with boundary of dimension $\geq 3$ equipped with a function $x$ with strictly
  convex level sets and $dx$ nonzero. Suppose also that
geodesics do not have points conjugate to their points of tangency to
the level sets of $x$. Then the semiclassically
  modified normal operator $A$, see \eqref{eq:mod-normal-op} with $\Phi(x)=-x$, of the geodesic X-ray transform is a {\em
    left invertible}
  elliptic order $-1$ pseudodifferential operator.
\end{thm}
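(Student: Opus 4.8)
The plan is to follow the strategy of Uhlmann–Vasy, replacing the scattering-algebra rescaling near an artificial boundary by a uniform semiclassical rescaling, and to push everything through to an honest invertibility statement (not merely invertibility modulo small errors in a thin layer). The first step is to write out the semiclassically modified normal operator $A = A_h$ explicitly: it is built from $L_h \circ I$, where $L_h$ inserts the $h$-dependent localizer $\chi_h(\beta)$ concentrating on geodesics making angle $O(\sqrt h)$ with the level sets of $x$, together with the conjugating weight $e^{\Phi(x)/h} = e^{-x/h}$ coming from the scattering-to-semiclassical dictionary; the precise definition is the one referenced as \eqref{eq:mod-normal-op}. The task is then to show $A_h \in \Psi_{\ocul,\semi}^{-1}$ (the semiclassical foliation algebra) and that it is both elliptic in that calculus and left invertible for $h$ small.

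Next I would compute the principal symbol of $A_h$ in the semiclassical foliation calculus. This is the standard oscillatory-integral stationary-phase computation from \cite{Uhlmann-Vasy:X-ray}, now carried out uniformly in the semiclassical parameter: parametrize geodesics near a point $z$ by their direction, localize to directions nearly tangent to $\{x = x(z)\}$, and expand. The upshot should be a symbol of the form (positive smooth factor) times an integral over the fiber sphere of the localizer, and the key point — exactly as in the original argument, but now valid globally rather than only for small $c$ — is that this symbol is elliptic: bounded below by a positive constant times $\langle \xi\rangle^{-1}$ uniformly down to $h = 0$. The convexity of the level sets enters here to guarantee the nondegeneracy of the relevant critical manifold (the tangency condition), and the no-conjugate-points hypothesis guarantees that the geodesic from a point of tangency back to $z$ contributes nondegenerately, so there is no cancellation; in dimension $\ge 3$ one has enough tangent directions for the fiber integral to be genuinely positive.

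The main obstacle — and the step where the semiclassical idea does real work — is passing from ellipticity to \emph{left} invertibility without layer stripping. Ellipticity in $\Psi_{\ocul,\semi}^{-1}$ gives a parametrix $B_h$ with $B_h A_h = \Id + E_h$ where $E_h$ has small semiclassical norm, but one must control the error uniformly and then absorb it. I would argue that $E_h \in h\Psi_{\ocul,\semi}^{0}$ (or at least has operator norm $\to 0$ as $h \to 0$) on the appropriate semiclassical Sobolev spaces; this requires checking that every term produced by the symbolic parametrix construction gains a factor of $h$, which is where having a genuine semiclassical calculus (rather than the scattering rescaling that only gains smallness of the artificial-boundary distance) is essential. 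Once $\|E_h\| < 1$ for $h$ fixed small, $\Id + E_h$ is invertible by Neumann series, so $(\Id + E_h)^{-1} B_h$ is a left inverse of $A_h$; finally, since $A_h$ is a single fixed operator in $\Psi_{\ocul,\semi}^{-1}$, left invertibility at one value of $h$ is the desired conclusion. One should also verify at the outset that the $x$-weight and the $h$-dependent cutoff are compatible with the structure of the foliation calculus — i.e.\ that $A_h$ genuinely lies in the algebra and is not merely a formal expression — which is a bookkeeping matter but needs the geometric hypotheses (nonvanishing $dx$, convex foliation) to be stated cleanly.
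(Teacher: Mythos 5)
Your proposal follows the paper's strategy quite closely: define $A_h$ as the exponentially conjugated and $h^{1/2}$-localized normal operator, show it lands in the semiclassical foliation algebra, compute the principal symbol by stationary phase after rescaling $t$ and $\lambda$ by $h^{1/2}$, establish ellipticity, and then obtain a left inverse from the elliptic parametrix construction because the parametrix error has small operator norm (so a Neumann series applies) at a fixed small $h$. All of that matches the paper.

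There is one genuine gap, at the heart of the ellipticity step. Your stationary-phase argument gives you the leading behavior of the symbol as $|(\xi,\eta)|\to\infty$: up to an elliptic factor, the angular integral $\int_{\sphere^{n-2}}\chi(\hat\lambda(\theta^\perp))\,d\theta^\perp$ of the localizer, which is positive when $\chi\geq 0$, $\chi(0)>0$, and $n\geq 3$. But ellipticity in the semiclassical calculus requires a lower bound $|a_0(z,\zeta)|\geq c\langle\zeta\rangle^{-1}$ for \emph{all} $\zeta$, and at finite $(\xi,\eta)$ the symbol $a_0(x,y,\xi,\eta)=\int e^{i(\xi(\hat\lambda\hat t+\alpha\hat t^2)+\eta\cdot\omega\hat t)}e^{-(\hat\lambda\hat t+\alpha\hat t^2)}\chi(\hat\lambda)\,d\hat t\,d\hat\lambda\,d\omega$ is a complex-valued oscillatory integral with no a priori sign. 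Your claim that it is ``bounded below by a positive constant times $\langle\xi\rangle^{-1}$ uniformly'' is precisely what needs to be proved, and the stationary-phase principal-symbol computation does not give it. The paper resolves this by choosing the cutoff to be a Gaussian, $\chi(\hat\lambda)=e^{-\hat\lambda^2/(2\alpha)}$, so that the $\hat t$ and $\hat\lambda$ integrals can be evaluated in closed form, yielding $a_0(x,y,\xi,\eta)=c'\int(1+\xi^2)^{-1}e^{-(\eta\cdot\omega)^2/(2\alpha(1+\xi^2))}\,d\omega$, which is manifestly positive; ellipticity then persists for compactly supported $\chi$ sufficiently close to the Gaussian in the Schwartz topology. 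Without some device of this kind (or another explicit computation at finite frequencies), the ellipticity claim is unsupported. You should also note that the no-conjugate-points hypothesis is used not to make the tangency contribution ``nondegenerate'' but to rule out \emph{additional} stationary points for $t$ bounded away from $0$, so that the $|\hat t|\lesssim 1$ stationary-phase contribution is the only one; nondegeneracy at the tangency critical set comes purely from the convexity coefficient $\alpha>0$.
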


Here left invertible means that there is
an order $1$ pseudodifferential operator $G$ on $\tilde M$ such that $GA=\Id$ on $\dot
H^s$ for all $s$ (this is the space of distributions in $H^s$ supported
in the domain, using H\"ormander's notation \cite{Hor}). An immediate
consequence, due to the fact that the operator $L$ (see
\eqref{eq:def-L}) used in the definition of our modified normal
operator $A$ is a standard Fourier integral operator for fixed
non-zero $h$, with appropriate order and canonical relation, is:

\begin{cor}
 Let $s\in\RR$. Under the hypotheses of the theorem,
the X-ray transform $I$ is injective on $\dot H^s$ and we have stability
estimates: there exists $C>0$ such that for all $f\in\dot H^s$, we
have $\|f\|_{\dot H^s(M)}\leq C\|If\|_{H^{s+1/2}(SM)}$.
\end{cor}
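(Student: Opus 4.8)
The plan is to deduce the corollary from Theorem~\ref{thm:main} by a soft functional-analytic argument, using that the modified normal operator $A$ of \eqref{eq:mod-normal-op} (with $\Phi(x)=-x$) is built by applying $I$ first: it factors as $A=\tilde L\,I$, where $\tilde L$ collects the operator $L$ of \eqref{eq:def-L}, carrying the localizer $\chi_h$, together with the finitely many pseudodifferential factors and weights occurring in \eqref{eq:mod-normal-op}. A left inverse of $A$ then automatically yields a left inverse of $I$, and all that is left is to keep track of Sobolev orders through this factorization.

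Fix the small nonzero value of $h$ for which Theorem~\ref{thm:main} provides the order $1$ pseudodifferential operator $G$ with $GA=\Id$ on $\dot H^s$ for every $s$. For this frozen $h$ the cutoff $\chi_h$ is an ordinary smooth symbol, so --- as recorded in the remark just before the corollary --- $\tilde L$ is, for this $h$, a classical Fourier integral operator from $SM$ to $\tilde M$; since $I$ is the standard geodesic X-ray transform, which is a Fourier integral operator of order $-1/2$, and since $A=\tilde L I$ is forced to be the elliptic order $-1$ pseudodifferential operator of the theorem, the composition of the two canonical relations is clean and $\tilde L$ has order $-1/2$. Put $R:=G\tilde L$; then $RI=GA=\Id$ on $\dot H^s$ for all $s$, i.e.\ $R$ is a left inverse of $I$. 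Injectivity follows at once: if $f\in\dot H^s$ and $If=0$, then $Af=\tilde L(If)=0$, hence $f=GAf=0$.

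For the stability estimate I would chain three boundedness statements, all standard given the orders above and the compactness of $SM$ and $\tilde M$: (i) $I\colon\dot H^s(M)\to H^{s+1/2}(SM)$ is bounded, this being the classical Sobolev mapping property of the geodesic X-ray transform as a Fourier integral operator of order $-1/2$; (ii) $\tilde L\colon H^{s+1/2}(SM)\to H^{s+1}(\tilde M)$ is bounded, $\tilde L$ being a Fourier integral operator of order $-1/2$; (iii) $G\colon H^{s+1}(\tilde M)\to H^s(\tilde M)$ is bounded, $G$ being a pseudodifferential operator of order $1$. Hence $R=G\tilde L\colon H^{s+1/2}(SM)\to H^s(\tilde M)$ is bounded; writing $C:=\|R\|_{H^{s+1/2}(SM)\to H^s(\tilde M)}$ and using that, in H\"ormander's notation, $\|f\|_{\dot H^s(M)}=\|f\|_{H^s(\tilde M)}$ for $f$ supported in $M$, we obtain for every $f\in\dot H^s(M)$
\[
\|f\|_{\dot H^s(M)}=\|RIf\|_{H^s(\tilde M)}\le C\,\|If\|_{H^{s+1/2}(SM)},
\]
which is the asserted estimate (and in particular re-proves injectivity).

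The one step I expect to require real work is the identification in the second paragraph: verifying that freezing $h\neq0$ turns $L_h$, and with it the semiclassical rescalings present in \eqref{eq:mod-normal-op}, into a genuine classical Fourier integral operator of order exactly $-1/2$ whose canonical relation composes cleanly with that of $I$. This means doing the order bookkeeping through the $h$-dependent rescaling, checking nondegeneracy of the resulting phase, and identifying the canonical relation precisely enough that the $L^2$-boundedness of Fourier integral operators upgrades to (ii) and that $\tilde L I$ reproduces the order $-1$ pseudodifferential operator furnished by Theorem~\ref{thm:main}. Granting this identification --- which is exactly the content of the remark preceding the corollary --- what remains is the routine norm chase above.
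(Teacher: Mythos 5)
Your overall strategy — factor the left inverse of $A$ into a left inverse of $I$ and chase Sobolev orders through the FIO mapping properties of $L_h$ and $I$ — is the one the paper intends. But there is a concrete error in the first step. You assert the factorization $A=\tilde L\,I$ with $\tilde L$ "collecting the weights," and then deduce $Af=\tilde L(If)$ and $RI=GA=\Id$ for $R=G\tilde L$. This is not correct: in \eqref{eq:mod-normal-op} the weight $e^{\Phi(x)/h}$ sits to the \emph{right} of $I$, so $A_h f=e^{-\Phi/h}L_h\bigl(I(e^{\Phi/h}f)\bigr)$. Multiplication by $e^{\Phi/h}$ does not commute with $I$, and there is no operator $\tilde L$ acting on functions on $SM$ for which $\tilde L(If)=e^{-\Phi/h}L_h I(e^{\Phi/h}f)$ — the right-hand side depends on $f$ through $I(e^{\Phi/h}f)$, not through $If$. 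Consequently $If=0$ does not give $Af=0$, and the identity $RI=\Id$ does not hold as written.

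The fix is short but needs to be said. For the fixed small $h$ furnished by Theorem~\ref{thm:main}, $e^{\pm\Phi(x)/h}$ is a smooth, bounded, nonvanishing function on a neighborhood of $M$, so multiplication by it is an isomorphism of $\dot H^s(M)$ with norm equivalent to the original one. Apply $GA=\Id$ not to $f$ but to $g:=e^{-\Phi/h}f\in\dot H^s$. Then $A g=e^{-\Phi/h}L_h I f$, and
\[
\|f\|_{\dot H^s}\lesssim\|g\|_{\dot H^s}=\|GAg\|_{\dot H^s}\lesssim\|Ag\|_{H^{s+1}}=\|e^{-\Phi/h}L_h If\|_{H^{s+1}}\lesssim\|L_h If\|_{H^{s+1}}\lesssim\|If\|_{H^{s+1/2}},
\]
the last step being precisely the FIO mapping property you invoke. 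Injectivity is then either read off from this estimate, or directly: $If=0\Rightarrow Ag=0\Rightarrow g=0\Rightarrow f=0$. Equivalently, $L_hI=e^{\Phi/h}A\,e^{-\Phi/h}$ has left inverse $e^{\Phi/h}G\,e^{-\Phi/h}$, which for fixed $h$ is again an order~$1$ pseudodifferential operator (the symbol is shifted by $-i\,d\Phi(x)/h$, a fixed imaginary translation which preserves the symbol class), so one may take $R=e^{\Phi/h}G\,e^{-\Phi/h}L_h$ rather than $G\tilde L$.

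Two smaller points. First, asserting that $\tilde L$ "has FIO order $-1/2$" is a shorthand that should be handled with care: the canonical relation of $L_h$ is not a graph, and the manifolds $SM$ and $\tilde M$ have different dimensions, so the Sobolev gain is not read off from the H\"ormander order alone. What you actually need, and what the paper's remark alludes to, is the mapping bound $L_h:H^{\sigma}(SM)\to H^{\sigma+1/2}(\tilde M)$ for the fixed-$h$ operator, which holds because $L_h$ is (up to the smooth compactly supported cutoff $\tilde\chi(\cdot,\lambda/h^{1/2},\cdot)$ and the weight) the adjoint of $I$. Second, you were right to flag the verification that freezing $h$ turns $L_h$ into a classical FIO with the stated mapping property as the real work; the paper treats this as known background, and once the conjugation by $e^{\Phi/h}$ is handled as above, the remainder of your norm chase is the intended argument.
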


On the other hand, if conjugate points 
are present, one can still work in appropriately small layers as 
determined by the geometry (to eliminate the conjugate points), but 
without the need to further shrink the size of the steps to obtain 
invertibility as required in \cite{Uhlmann-Vasy:X-ray}; this approach still uses the scattering algebra at the 
artificial boundary.

\begin{thm}\label{thm:main-sc}
  Suppose $M$ is a compact Riemannian manifold with boundary of dimension $\geq 3$ equipped with a function $\xt$ with strictly
  concave level sets in $\xt\geq -c$, from the super-level sets, and $d\xt$
  nonzero. Suppose also that geodesics contained in the region $\xt\geq -c$
do not have points conjugate to their points of tangency to
the level sets of $\xt$. Then, with $x=\xt+c$, the semiclassically
  modified normal operator $A$, see \eqref{eq:mod-normal-op}, with
  $\Phi(x)=x^{-1}$ and with cutoff $\tilde\chi$ given in \eqref{eq:tilde-chi-sc}, of the geodesic X-ray transform is a {\em
    left invertible}
  elliptic order $(-1,-2)$ scattering pseudodifferential operator.
\end{thm}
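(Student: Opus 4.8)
The plan is to reduce the scattering case to the same microlocal normal-operator analysis underlying Theorem~\ref{thm:main}, but now carried out in Melrose's scattering calculus on the compactification of $M$ near the artificial boundary $x=0$, where $x=\xt+c$. First I would set up the geometry: near the artificial boundary, introduce boundary-adapted coordinates $(x,y)$ with $x$ the defining function of the new boundary face and $y$ coordinates along the level sets, and pass to the scattering cotangent variables, writing covectors as $\xisc\,\frac{dx}{x^2}+\etasc\cdot\frac{dy}{x}$. The choice $\Phi(x)=x^{-1}$ is exactly what forces the localizer $\chi_h$, concentrating on geodesics nearly tangent to the level sets, to become more sharply tangential in the correct $x$-dependent way as $x\to 0+$, so that the resulting operator lies in the scattering calculus rather than merely the b-calculus; the cutoff $\tilde\chi$ of \eqref{eq:tilde-chi-sc} implements this. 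The target is to show $A$ is an elliptic element of $\Psisc^{-1,-2}(M)$ that is invertible modulo the residual (rapidly decaying, smoothing) ideal, and then upgrade to a genuine left inverse.

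The key steps, in order: (1) Identify the operator $L$ of \eqref{eq:def-L} with the cutoff $\tilde\chi$ as a scattering FIO (equivalently analyze $A=L\circ I$, or the appropriately conjugated normal operator of \eqref{eq:mod-normal-op}) and show it lies in $\Psisc^{-1,-2}(M)$ by computing its Schwartz kernel near the scattering front face and checking the correct decay/conormality; this is where the concavity hypothesis in $\xt\geq -c$ enters, guaranteeing that geodesics tangent to a level set stay on one side, so that the cutoff geometry is non-degenerate. (2) Compute the principal symbol at fiber infinity (the standard/interior symbol) and verify interior ellipticity of order $-1$ — this is essentially the Euclidean X-ray normal operator computation as in \cite{Uhlmann-Vasy:X-ray}, giving a positive multiple of $|\xisc|^2/|(\xisc,\etasc)|$-type expression, hence nonvanishing. (3) Compute the boundary symbol (the scattering symbol at $x=0$) and verify ellipticity of order $-2$ there; this is the crucial place where the no-conjugate-points-to-the-point-of-tangency hypothesis is used, in the same way it is used in \cite{Uhlmann-Vasy:X-ray}: it ensures the relevant integral transform in the boundary model (a transform over the tangent geodesics at the foliation) is injective with no cancellation, so the boundary symbol is invertible. (4) Conclude by scattering-elliptic parametrix construction: there is $G\in\Psisc^{1,2}(M)$ with $GA=\Id-R$, $R$ residual; then, since $A$ has a trivial nullspace on $\dot H^s$ (injectivity following, as in the corollary, from the FIO property of $L$ and $I$ for fixed $h\neq 0$, or directly from a semiclassical estimate for $h$ small), a standard functional-analytic argument removes $R$ and produces the genuine left inverse $GA=\Id$ on $\dot H^s_{\scl}$ for all $s$.

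The main obstacle I expect is Step~(3), the analysis of the scattering boundary symbol: one must show that in the rescaled model at $x=0$ — where the geodesic flow degenerates and only the tangential-at-the-foliation geodesics survive in the support of $\tilde\chi$ — the resulting localized and rescaled X-ray-type transform is elliptic, i.e.\ its symbol does not vanish on the scattering cosphere bundle at the boundary. This requires (a) correctly identifying the limiting model operator as $x\to0+$ after the $x^{-1}$ rescaling built into $\Phi$, keeping careful track of how $\tilde\chi$ scales, and (b) showing that the no-conjugate-points condition rules out the vanishing of the model symbol, presumably via a positivity/coercivity argument on the space of tangent geodesics analogous to the one in \cite{Uhlmann-Vasy:X-ray} but now phrased purely at the level of the boundary model rather than needing a smallness of $c$. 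A secondary technical point is verifying that the orders are exactly $(-1,-2)$ and not worse at the corner where fiber infinity meets the scattering boundary, which amounts to a uniform estimate interpolating Steps~(2) and~(3); this is expected to be routine once the kernel structure near the scattering front face is pinned down.
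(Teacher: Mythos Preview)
Your overall architecture (scattering coordinates, kernel analysis, symbol computation, parametrix) is right, but there are two genuine gaps.

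\textbf{First, the mechanism for removing the residual error is wrong.} You propose a scattering parametrix $GA=\Id-R$ with $R$ residual, and then a ``standard functional-analytic argument'' using injectivity of $A$ to remove $R$. This is circular: injectivity is exactly what you are trying to prove, and the FIO property of $L$ and $I$ does not by itself give it. In the non-semiclassical scattering calculus the residual $R$ is compact but has no reason to have small norm, so $\Id-R$ need not be invertible. The paper's point is that $A_h$ lies not in $\Psisc^{-1,-2}$ but in $h\Psischh^{-1,-2}$, the \emph{semiclassical foliation} scattering algebra; the parametrix construction there produces an error in $h^\infty\Psischh^{-\infty,-\infty}$, which has small operator norm for $h$ small, so $\Id-R$ is invertible by Neumann series. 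You mention ``a semiclassical estimate for $h$ small'' only parenthetically, but it is the entire engine of the proof; without it Step~(4) does not close.

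\textbf{Second, you have misplaced the no-conjugate-points hypothesis.} You locate it in Step~(3), the boundary symbol at $x=0$, expecting it to prevent vanishing of the model symbol there. In fact the boundary (and finite-$(\xisc,\etasc)$) ellipticity follows from an explicit Gaussian choice of $\chi$ and a direct computation, with no conjugate-point input at all. The no-conjugate-points assumption is instead used in your Step~(1): when analyzing the oscillatory integral for the kernel with $t$ bounded away from $0$, one needs the Jacobian $\partial\gamma/\partial(t,\lambda,\omega)$ to have full rank so that the phase has no stationary points there, allowing integration by parts to show those contributions are $O((xh)^\infty)$. Without this, you cannot show $A_h$ lies in the algebra in the first place; with it, the ellipticity computations in Steps~(2)--(3) are purely local at $\hat t=0$ and independent of conjugate-point issues.
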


The method is also applicable to other X-ray transform problems, such
as the X-ray transform on asymptotically conic, e.g.\ asymptotically Euclidean, spaces, studied in
work with Zachos, based in part on Zachos' work \cite{Zachos:Thesis}, where it is harder to implement the original `thin
layer' approach of \cite{Uhlmann-Vasy:X-ray}.

Notice that for nonlinear problems there is an additional role in
localizing near $\pa M$, not addressed by the semiclassicalization, namely if one uses a Stefanov-Uhlmann
pseudolinearization formula \cite{Stefanov-Uhlmann:Rigidity}, one
needs to make sure that the coefficients of the transform in the
formula are close to known values, typically at the boundary. This
need for localization can
be eliminated if the unknown quantity is a priori globally close to a given
background, in which case one can obtain injectivity results provided
one has injectivity results for the background, without having to
introduce the additional small semiclassical parameter, but in order
to obtain the prerequisite injectivity results for the background, the
semiclassical approach is still very useful.

The plan of this short paper is the following.
In Section~\ref{sec:semicl} we introduce the analytic ingredients,
namely the semiclassical foliation pseudodifferential algebras, and
then in Section~\ref{sec:X-ray} we use this for the analysis of the
X-ray transform. The whole of Section~\ref{sec:X-ray} consists of the
proofs of Theorems~\ref{thm:main} and \ref{thm:main-sc}.

I am very grateful to Richard Melrose, Plamen Stefanov and Gunther
Uhlmann for their interest in the project and for helpful comments.

\section{The semiclassical algebra}\label{sec:semicl}
In this section we discuss an inhomogeneous pseudodifferential semiclassical algebra
associated to a foliation $\cF$ on a manifold $M$. As is usual, it depends on a
semiclassical parameter, traditionally denoted by $h\in [0,1]$; it is
an $h$-dependent family of operators on $M$. The
standard semiclassical algebra is built from vector fields $hV$,
$V\in\Vf(M)$ (see e.g.\ \cite{Zworski:Semiclassical}), so semiclassical differential operators are in the
algebra over $\CI(M)$ generated by these, i.e.\ locally finite sums of
finite products of these with $\CI(M)$. Thus, in local coordinates
$z\in O\subset\RR^n$,
$P\in\Diffh^m(M)$ means that
$$
P=\sum_{|\alpha|\leq m} a_\alpha(z,h) (hD_z)^{\alpha},
$$
with $a_\alpha\in\CI(\RR^n\times[0,1])$, supported in $O$.
Thus, as a traditional differential operator, $P$ degenerates at
$h=0$, but as a semiclassical operator it does not; its semiclassical
principal symbol is
$$
p(z,\zeta)=\sum_{|\alpha|\leq m} a_\alpha(z,0) \zeta^{\alpha},
$$
obtained by replacing $hD_z$ by $\zeta$, and evaluating the
coefficients at $h=0$, and the operator is
semiclassically elliptic if there is $c>0$ such that
$$
|p(z,\zeta)|\geq c\langle\zeta\rangle^m,
$$
i.e.\ $p$ is non-vanishing and is elliptic in the standard sense.

Our new semiclassical foliation algebra is built from vector fields which are either
semiclassical in the sense above, i.e.\ $hV$, $V\in\Vf(M)$, or
$h^{1/2}$-semiclassical and tangent to the foliation:
$$
\Vfhh(M)=h\Vf(M)+h^{1/2}\Vf(M;\cF),
$$
where $\Vf(M;\cF)$ denotes the Lie algebra of vector fields tangent to
the foliation. In local coordinates, in which the foliation is locally
given
by $x=(x_1,\ldots,x_k)$ being constant, and remaining coordinates
(which are thus coordinates along the leaves) are
$y_1,\ldots,y_{n-k}$, this means that the semiclassical
foliation vector fields are
$$
\sum_{j=1}^k a_j(x,y,h) hD_{x_j}+\sum_{j=1}^{n-k} b_j(x,y,h) h^{1/2} D_{y_j}.
$$
Correspondingly, elements of the algebra of semiclassical foliation differential
operators of order $m$, $\Diffhh(M)$, are of the form
$$
P=\sum_{|\alpha|+|\beta|\leq m} a_{\alpha\beta}(x,y,h) (hD_x)^\alpha (h^{1/2}D_y)^\beta,
$$
and the semiclassical foliation principal symbol is
$$
p(x,y,\xi,\eta)=\sum_{|\alpha|+|\beta|\leq m} a_{\alpha\beta}(x,y,0) \xi^\alpha\eta^\beta,
$$
with semiclassical ellipticity meaning that there is $c>0$ such that
$$
|p(x,y,\xi,\eta)|\geq c\langle (\xi,\eta)\rangle^m.
$$

A somewhat different perspective of $\Vfhh(M)$ is that it is a
conformal version of the adiabatic Lie algebra. In the latter, with
$\ep$ the adiabatic parameter, one considers a fibration (rather than
just a foliation) $\cF$, and the sum of $\ep\Vf(M)$ and
$\Vf(M;\cF)$, so in local coordinates as above the vector fields are
$$
\sum_{j=1}^k a_j(x,y,\ep) \ep D_{x_j}+\sum_{j=1}^{n-k} b_j(x,y,\ep) D_{y_j}.
$$
Thus, with $\ep=h^{1/2}$, our Lie algebra is $\ep$ times the adiabatic
Lie algebra, i.e.\ is a conformal, more precisely, a $1$-conformal (in
that the conformal factor is the first power of the adiabatic
parameter $\ep$) version of the adiabatic Lie algebra. The conformal
factor makes this algebra more localized, just like in the comparison
of the more microlocalized scattering \cite{RBMSpec} and the more global b-algebras
of Melrose \cite{Melrose:Transformation, Melrose:Atiyah}; this is what allows for relaxing the requirements on $\cF$
to being a foliation.

Returning to our semiclassical perspective, we turn this into a
pseudodifferential operator algebra $\Psihh(M;\cF)$ as follows. First
starting locally with $\RR^n$ and the foliation $\cF_{\RR^n}$ given by the joint
level sets of the $x_j$, we consider symbols
$a$ with
$$
|(D_z^\alpha D_\zeta^\beta a)(z,\zeta,h)|\leq C_{\alpha\beta}\langle \zeta\rangle^{m-|\beta|},
$$
i.e.\ the standard semiclassical class (one can also require
differentiability in $h$; since this is a parameter, i.e.\ there is no
differentiation in it, the choice is mostly irrelevant), but
quantizing it according to the foliation as
\begin{equation*}\begin{aligned}\label{eq:semicl-foliation-quantize}
    &(A_h u)(x,y)=(Au)(x,y,h)\\
    &=(2\pi)^{-n} h^{-n/2-k/2}\int
e^{i(x-x')\cdot\xi/h+i(y-y')\cdot\eta/h^{1/2}} a(x,y,\xi,\eta,h)
u(x',y',h)\,d\xi\,d\eta\,dx'\,dy'.
\end{aligned}\end{equation*}
This gives a class $\Psihh(\RR^n;\cF_{\RR^n})$, where the uniformity
statement of the symbolic estimates
in $z$ on $\RR^n$ is not shown in the notation.

Of course, one can change variables as $\tilde\eta=h^{1/2}\eta$, to
obtain the usual semiclassical quantization
$$
(2\pi)^{-n} h^{-n}\int
e^{i(x-x')\cdot\xi/h+i(y-y')\cdot\tilde\eta/h} a(x,y,\xi,h^{-1/2}\tilde\eta,h)
u(x',y',h)\,d\xi\,d\tilde\eta\,dx'\,dy'
$$
of the symbol
$$
\tilde a(x,y,\xi,\tilde\eta,h)=a(x,y,\xi, h^{-1/2}\tilde\eta,h);
$$
regarded as a semiclassical symbol, $\tilde a$ is of a weaker type:
$$
|D_x^\alpha D_y^\beta D_\xi^\gamma D_{\tilde\eta}^\delta\tilde
a(x,y,\xi,\tilde\eta,h)|\leq C_{\alpha\beta\gamma\delta}h^{-|\delta|/2}\langle (\xi,h^{-1/2}\tilde\eta)\rangle^{m-|\gamma|-|\delta|},
$$
and while this is sufficient to push the semiclassical algebra
through, it is more precise to consider the foliation setup above.

\begin{rem}
As an aside, one can also consider this as a `blown-down' 2-microlocal
coisotropic
semiclassical algebra
corresponding to the coisotropic $\tilde\eta=0$, which corresponds
exactly to the joint characteristic set of the semiclassical vector
fields tangent to $\cF$. The type of this algebra is a $1/2$-type, in
that from the semiclassical perspective one blows up of the
coisotropic parabolically at $h=0$ (the parabolic direction being tangent to $h=0$), so the
scaling is $h^{-1/2}\tilde\eta$. More singular (with homogeneity $1$)
and thus delicate coisotropic algebras,
corresponding to hypersurfaces,
were introduced by Sj\"ostrand and Zworski
\cite{Sjostrand-Zworski:Fractal}. (Though it was used for a different
purpose and from a different perspective, the work
\cite{Gannot-Wunsch:Conormal} of Gannot and Wunsch introduced semiclassical
paired Lagrangian distributions to extend the work of de Hoop, Uhlmann
and Vasy \cite{DUV:Diffraction} from the non-semiclassical setting, and this relates closely
to 1-homogeneous 2-microlocalization at a coisotropic.) The `blown-down' adjective refers to
the fact that from this blow-up perspective, the standard
semiclassical behavior (i.e.\ what happens away from $\tilde\eta=0$)
is blown down, since $\tilde\eta\neq 0$ corresponds to
$|\eta|\to\infty$ as $h\to 0$, and we have placed joint symbolic
demands on $a$ in $(\xi,\eta)$.
\end{rem}

We now return to a discussion of the basic properties of the
semiclassical foliation algebra.
One can more generally allow $a$ in \eqref{eq:semicl-foliation-quantize} to depend on $z'=(x',y')$ as
well. The standard left- and right-reduction arguments, removing the
$z'$, resp.\ $z$, dependence apply, see e.g.\ \cite{Melrose:Microlocal-notes, Vasy:Minicourse}, and give asymptotic expansions, so
for instance the right-reduced version of $a(z,z',\zeta)$ is
$$
b(z',\zeta',h)\sim \sum_{\alpha,\beta}
\frac{1}{\alpha!\beta!}(-h D_\xi)^\alpha
(-h^{1/2}D_\eta)^\beta\pa_x^\alpha\pa_y^\beta a|_{z=z'=(x',y')},
$$
while the left-reduced version is
$$
c(z,\zeta,h)\sim \sum_{\alpha,\beta}
\frac{1}{\alpha!\beta!}(h D_\xi)^\alpha
(h^{1/2}D_\eta)^\beta\pa_{x'}^\alpha\pa_{y'}^\beta a|_{z'=z=(x,y)}.
$$
This gives (again, see \cite{Melrose:Microlocal-notes, Vasy:Minicourse}) that the semiclassical foliation pseudodifferential
operators form a filtered $*$-algebra (with respect to the $L^2$-inner
product):
\begin{equation*}\begin{aligned}
    &A\in \Psihh^{m}(\RR^n;\cF_{\RR^n}),\ B\in \Psihh^{m'}(\RR^n;\cF_{\RR^n})\Rightarrow
    AB\in\Psihh^{m+m'}(\RR^n;\cF_{\RR^n}),\\
    &A\in\Psihh^m (\RR^n;\cF_{\RR^n})\Rightarrow A^*\in\Psihh^m (\RR^n;\cF_{\RR^n}),
 \end{aligned}\end{equation*}
and moreover that with
$\sigma_m(A)=[a]\in S^m/h^{1/2} S^{m-1}$,
so for smooth (in $h^{1/2}$) $a$,
$$
\sigma_m(A)(z,\zeta)|_{h=0}=a(z,\zeta,0),
$$
we have
$$
\sigma_{m+m'}(AB)=\sigma_m(A)\sigma_{m'}(B),\qquad\sigma_m(A^*)=\overline{\sigma_m(A)}.
$$

One also has the standard elliptic parametrix construction. One says
that $A$, and its principal symbol $a$, are elliptic if $a$ has an
inverse $b\in S^{-m}/h^{1/2}S^{-m-1}$ in the sense that $ab-1\in
h^{1/2} S^{-1}$; this is equivalent to the lower bound
$$
|a(z,\zeta,h)|\geq c\langle\zeta\rangle^m,\qquad c>0,
$$
for $h$ small (i.e.\ there exists $h_0>0$ such that the estimate holds
for $h<h_0$).
Then there is a parametrix $B\in \Psihh^{-m}(\RR^n;\cF_{\RR^n})$ such that $AB-I,BA-I\in h^\infty\Psihh^{-\infty}(\RR^n;\cF_{\RR^n})$.

Furthermore, $\Psihh (\RR^n;\cF_{\RR^n})$ is invariant under local diffeomorphisms
preserving the foliation
as is easily seen by the standard Kuranishi trick; this allows the
introduction of the class $\Psihh(M,\cF)$ on manifolds, which has all
the analogous properties to $\Psihh (\RR^n;\cF_{\RR^n})$ discussed above. In addition, with $\Hhh^s(M)$ the foliation semiclassical
Sobolev space, i.e.\ the standard Sobolev space $H^s(M)$ but with the
natural $h$-dependent family of norms, so for $s\geq 0$ integer, locally,
$$
\|u\|_{\Hhh^s}^2=\sum_{|\alpha|+|\beta|\leq s}\|(hD_x)^\alpha (h^{1/2}
D_y)^\beta u\|^2_{L^2},
$$
for negative integer $s$ by duality, in general by interpolation (or
via the foliation Fourier transform, or
via elliptic ps.d.o's),
$$
\Psihh^m(M;\cF)\subset\cL(\Hhh^s(M),\Hhh^{s-m}(M))
$$
uniformly in $h$.

A great advantage of the semiclassical algebra, which is maintained by
the foliation semiclassical algebra, is that the error of the elliptic
parametrix construction is $O(h^\infty)$, thus small for $h$ small, as
an element of $\cL(\Hhh^s(M),\Hhh^{s}(M))$, so e.g.\ $BA=I+E$ as the
output of the elliptic parametrix construction means
that there exists $h_0>0$ such that $(\Id+E)^{-1}$ exists  for $h<h_0$
(and differs from $\Id$ by an element of
$h^\infty\Psihh^{-\infty}(M,\cF)$), and thus $A$ actually has a left
inverse, and similarly it also has a right inverse.

As in \cite{Uhlmann-Vasy:X-ray}, we actually work on an ambient
manifold $\tilde M$ with $M$ a domain with smooth boundary in it, and
$A\in\Psihh^m(\tilde M;\cF)$ is elliptic on a neighborhood of
$M$. Then there exists $B\in\Psihh^{-m}(\tilde M;\cF)$ such that
$AB-I,BA-I\in \Psihh^0(\tilde M;\cF)$ are in fact in
$h^\infty\Psihh^{-\infty}(\tilde M;\cF_{\RR^n})$ when localize to a
sufficiently small neighborhood of $M$, i.e.\ for suitable cutoffs
$\psi$, identically $1$ in $M$,
$$
\psi(AB-I)\psi,\psi(BA-I)\psi\in
h^\infty\Psihh^{-\infty}(\tilde M,\cF).
$$
So in particular, with $E=BA-I$, for
$v$ supported in $M$, $\psi v=v$, so $\psi BA\psi=\psi^2+\psi E\psi$
shows that
$$
(\Id+\psi E\psi)v=\psi BAv.
$$
Now $\Id+\psi E\psi$ is invertible for sufficiently small $h$, so
$(\Id+\psi E\psi)^{-1}\psi B$ is a left inverse for $A$ on
distributions supported in $M$.

There is an immediate extension of this algebra to the scattering
setting of Melrose \cite{RBMSpec}; this algebra actually can be locally
reduced to a standard H\"ormander algebra, which in turn was studied earlier by
Parenti \cite{Parenti:Operatori} and Shubin \cite{Shubin:Pseudodifferential}. For simplicity, since this is the only relevant case for us,
we consider only the case of a codimension one foliation given by a
boundary defining function $x$. Recall that scattering vector fields $V\in\Vsc(M)$
on a manifold with boundary $M$
are of the form $xV'$, $V'$ is a b-vector field, i.e.\ a vector field
on $M$ tangent to $\pa M$, so in local coordinates, they are of the
form
$$
a_0(x,y)x^2D_x+\sum a_j(x,y)xD_{y_j}.
$$
As mentioned, we take our foliation to be
given by the level sets of $x$, so the foliation tangent sc-vector
fields are locally
$$
\sum a_j(x,y)xD_{y_j}.
$$
The semiclassical version of $\Vsc(M)$ is
simply $\Vsch(M)=h\Vsc(M)$ (for which pseudodifferential operators
were introduced by Vasy and Zworski \cite{Vasy-Zworski:Semiclassical},
but in the local, Euclidean, setting this has a much longer history); the semiclassical foliation version is
$$
\Vschh(M;\cF)=h\Vsc(M)+h^{1/2}\Vsc(M;\cF).
$$
Thus, the semiclassical foliation scattering differential operators
take the form
$$
\sum_{\alpha+|\beta|\leq m} a_{\alpha\beta}(x,y,h)(hx^2D_x)^{\alpha} (h^{1/2}xD_y)^{\beta}.
$$
The corresponding pseudodifferential operators $A\in\Psischh^{m,l}(M,\cF)$
again arise by a modified semiclassical quantization of standard
semiclassical symbols $a$, i.e.\ ones satisfying (conormal in $x$)
symbol estimates
$$
|(xD_x)^\alpha D_y^\beta D_\tau^\gamma D_\mu^\delta
a(x,y,\tau,\mu,h)|\leq C_{\alpha\beta\gamma\delta} \langle
(\tau,\mu)\rangle^m x^{-l},
$$
namely
\begin{equation*}\begin{aligned}
    &A_h u(x,y)=Au(x,y,h)\\
    &=(2\pi)^{-n} h^{-n/2-1/2}\int e^{i\Big(\frac{x-x'}{x^2}\frac{\tau}{h}+\frac{y-y'}{x}\frac{\mu}{h^{1/2}}\Big)}a(x,y,\tau,\mu,h)\,u(x',y')\,\frac{dx'\,dy'}{(x')^{n+1}}\,d\tau\,d\mu.
  \end{aligned}\end{equation*}
Thus, in $x>0$, these are just the standard semiclassical foliation
operators, in $h>0$ the standard scattering pseudodifferential
operators, with the combined behavior near $x=h=0$. In particular we
have an elliptic theory as in the semiclassical foliation setting: if
$A$ is elliptic, meaning
$$
|a(x,y,\tau,\mu,h)|\geq cx^{-l}\langle(\tau,\mu)\rangle^m,\qquad c>0,
$$
for $h$ sufficiently small,
then there is a parametrix $B\in\Psischh^{-m,-l}(M,\cF)$ with
$$
AB-\Id,BA-\Id\in h^\infty\Psischh^{-\infty,-\infty}(M,\cF),
$$
and there exists
$h_0>0$ such that for $h<h_0$, $A\in\cL(\Hschh^{s,r},\Hschh^{s-m,r-l})$
is invertible with uniform bounds. One can again proceed with
localizing the elliptic parametrix construction as above in case one has a smooth
domain $M$ in an ambient space $\tilde M$.

\section{Global X-ray transform}\label{sec:X-ray}
We now consider the inverse problem for the X-ray transform
$$
If(\beta)=\int_{\gamma_\beta} f(\gamma_\beta(t))\,dt,
$$
where for $\beta\in SM$, $\gamma_\beta$ is the geodesic through
$\beta$ (or in fact other similar families of curves work equally
well), i.e.\ $\beta=(\gamma_\beta(0),\dot\gamma_{\beta}(0))\in S_{\gamma_\beta(0)}M$ (with
the dot denoting $t$-derivatives) utilizing the notation of \cite{Uhlmann-Vasy:X-ray}.
We overall follow the approach of
\cite[Section~4-5]{Stefanov-Uhlmann-Vasy:Rigidity-Normal} via oscillatory
integrals, rather than the blow-up analysis of
\cite{Uhlmann-Vasy:X-ray}. Concretely, the approach of the non-semiclassical proof of Proposition~4.2 in
\cite{Stefanov-Uhlmann-Vasy:Rigidity-Normal} underlies most of the
local arguments near $t=0$ and as we follow these quite closely, we
will be relatively brief.

With $x$ the function giving the foliation, writing
$x(\gamma_\beta(t))=\gamma_\beta^{(1)}(t)$, the concavity hypothesis
is that
\begin{equation}\label{eq:concavity-hyp}
\dot\gamma^{(1)}_\beta(t)=0\Longrightarrow\ddot\gamma^{(1)}_\beta(t)>0.
\end{equation}
By compactness considerations this implies that there exist $\ep>0$
and $C_0>0$
such that
$$
|\dot\gamma^{(1)}_\beta(t)|\leq\ep\Longrightarrow\ddot\gamma^{(1)}_\beta(t)\geq
C_0.
$$
It is convenient to take advantage of this also holding in a
neighborhood $M'$ of $M$ in $\tilde M$.

\begin{rem}\label{rem:bdy-convex}
  We actually do not need to make any convexity assumptions on $\pa
M$. However, if it not strictly convex, we need to consider the
geodesic segments as those in $M'$, and some of these may intersect
$M$ in a number of segments. This is not an issue below since knowing
$If$ in the sense of integrals along geodesic segments in $M$, one
also obtains $I' f$, the integrals along geodesic segments in $M'$
when $f$ is supported in $M$. {\em We do not make this distinction explicit
below; thus $I$ actually refers to $I'$ from this point on.} (There is
no such issue if $\pa M$ is strictly convex and one chooses $\pa M'$
appropriately.)
\end{rem}

We write, relative to our convex foliation and some coordinates,
denoted by $y$, along the level sets,
$\beta=(x,y,\lambda,\omega)$, so we write tangent vectors as
$$
\lambda\,\pa_x+\omega\,\pa_y,
$$
and use $\gamma^{(1)}$ to denote the $x$
component of $\gamma$, and similarly $\gamma^{(2)}$ to denote the $y$
component of $\gamma$ to avoid confusion.
Then we have
\begin{equation}\begin{aligned}\label{eq:gamma-exp}
\gamma_{x,y,\lambda,\omega}(t)&=(\gamma^{(1)}_{x,y,\lambda,\omega}(t)
, \gamma^{(2)}_{x,y,\lambda,\omega}(t))\\
&=\big(x+\lambda t+\alpha(x,y,\lambda,\omega)
t^2+t^3\Gamma^{(1)}(x,y,\lambda,\omega,t),\\
&\qquad\qquad\qquad y+\omega t+t^2\Gamma^{(2)}(x,y,\lambda,\omega,t)\big)
\end{aligned}\end{equation}
with $\Gamma^{(1)},\Gamma^{(2)}$
smooth functions of $x,y,\lambda,\omega,t$, $\alpha$ a smooth function
of $x,y,\lambda,\omega$, and $\alpha(x,y,0,\omega)\geq C>0$ by the
concavity from the super-level sets hypothesis; see
\cite[Section~3]{Uhlmann-Vasy:X-ray} and \cite[Proof of
Proposition~4.2]{Stefanov-Uhlmann-Vasy:Rigidity-Normal}. For us the
relevant regime will be $\lambda$ small; we shall restrict to an
arbitrarily small neighborhood of $\lambda=0$ via the semiclassical
localization.

This implies the following bound:

\begin{lemma}\label{lemma:gamma-1-est}
  There exists $T>0$ such that every geodesic reaches $\pa M'$ (thus
  $\pa M$) in
  affine parameter $\leq T$.

  Moreover, there exist $\lambda_0>0$ and $C>0$ such that for all
  $\beta=(x,y,\lambda,\omega)$ with $|\lambda|<\lambda_0$ and
  for $t$ in the closed interval on which $\gamma_\beta$ is defined, 
\begin{equation}\label{eq:gamma-1-est}
\gamma^{(1)}_{x,y,\lambda,\omega}(t)\geq x+\lambda t+Ct^2/2.
\end{equation}
\end{lemma}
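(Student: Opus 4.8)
The plan is to derive both statements directly from the Taylor expansion \eqref{eq:gamma-exp} together with the quantitative consequence of the concavity hypothesis \eqref{eq:concavity-hyp}, namely that there exist $\ep>0$, $C_0>0$ with $|\dot\gamma^{(1)}_\beta(t)|\leq\ep\Rightarrow\ddot\gamma^{(1)}_\beta(t)\geq C_0$ on $M'$, and the fact that $\alpha(x,y,0,\omega)\geq C>0$. First I would prove \eqref{eq:gamma-1-est}. Fix $C$ to be, say, half of $\min(C_0,\inf\alpha(x,y,0,\omega))$, where the infimum is over the relevant compact set; by continuity of $\alpha$ there is $\lambda_0>0$ so that $\alpha(x,y,\lambda,\omega)\geq C$ whenever $|\lambda|<\lambda_0$. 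Shrinking $\lambda_0$ further if needed, and using compactness of the parameter set and the uniform bound $|t|\leq T$, the cubic remainder term $t^3\Gamma^{(1)}$ can be absorbed: I would argue that for $|\lambda|<\lambda_0$ and $t$ small we have $\alpha t^2 + t^3\Gamma^{(1)}\geq (C/2)t^2\geq Ct^2/2$ from the expansion alone, while for $t$ bounded below (away from $0$) one instead uses the concavity directly — if $\dot\gamma^{(1)}_\beta$ stays within $\ep$ of $0$ on $[0,t]$ then $\ddot\gamma^{(1)}_\beta\geq C_0$ there, so by two integrations $\gamma^{(1)}_\beta(t)\geq x+\lambda t + C_0 t^2/2$; and if $\dot\gamma^{(1)}_\beta$ exceeds $\ep$ at some point, since initially $|\lambda|<\lambda_0<\ep$ and the function is convex wherever it is in the strip, once it leaves the strip upward it never returns, so $\gamma^{(1)}_\beta$ is increasing past that point and the bound is maintained by monotonicity. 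Choosing $C\leq C_0$ makes the two regimes consistent.

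For the first assertion, that every geodesic reaches $\pa M'$ in affine parameter $\leq T$: this does not need the smallness of $\lambda$, and I would get it from a standard compactness/ODE argument. The geodesic flow on the (compact) unit sphere bundle over $M'$ is complete in the interior, and since we work in the slightly larger manifold $M'\supset M$, any maximally extended geodesic segment through a point of $M$ must exit $M'$; the only thing to rule out is that it takes arbitrarily long to do so. Were there a sequence of geodesics staying in $M$ (or in the compact set $\overline{M}$) for affine times $\to\infty$, a limiting argument (Arzelà–Ascoli on the flow, or just compactness of $S\overline{M'}$) would produce a complete geodesic trapped in $\overline M$; but the convex foliation function $x$ restricted to such a geodesic satisfies exactly the concavity inequality \eqref{eq:concavity-hyp}, hence $x\circ\gamma$ has at most one critical point and is strictly convex there, so $x\circ\gamma\to+\infty$ in at least one time direction, contradicting boundedness. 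This yields a uniform $T$. Alternatively, and perhaps more cleanly for the write-up, one can quantify \eqref{eq:gamma-1-est}-style lower bounds for $\gamma^{(1)}$ uniformly in $\beta$ (not just $|\lambda|$ small) using the same two-regime split, concluding that $x\circ\gamma_\beta(t)$ grows at least linearly or quadratically and so must leave the range of $x$ on $M'$ within a controlled time.

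The main obstacle is the transition in the proof of \eqref{eq:gamma-1-est} between the "small $t$" regime, where one trusts the Taylor expansion and the positivity of $\alpha$, and the "$t$ not small" regime, where the cubic error $t^3\Gamma^{(1)}$ is no longer dominated and one must instead invoke the global concavity inequality and a monotonicity/no-return argument for $\dot\gamma^{(1)}_\beta$. Making the constants match across this transition — i.e. choosing a single $C$ and a single $\lambda_0$ that work uniformly in $(x,y,\omega)$ and for all admissible $t$ — is where the care lies; everything else is the routine bookkeeping of compactness and Taylor's theorem. I expect the cleanest route is: pick $C=\tfrac12\min\{C_0,\ \inf\alpha(\cdot,0,\cdot)\}$, then (i) choose $\delta>0$ and $\lambda_0\in(0,\min\{\delta,\ep\})$ so that $\alpha(x,y,\lambda,\omega)+t\,\Gamma^{(1)}\geq C$ for $|\lambda|<\lambda_0$, $|t|\leq\delta$, giving the bound for $|t|\leq\delta$; (ii) for $t\geq\delta$, observe $\dot\gamma^{(1)}_\beta(0)=\lambda$ with $|\lambda|<\ep$, so either $\dot\gamma^{(1)}_\beta$ remains in $[-\ep,\ep]$ up to time $t$ — whence $\ddot\gamma^{(1)}_\beta\geq C_0\geq 2C$ throughout and \eqref{eq:gamma-1-est} follows by integrating twice — or it first exits at some $t_*\in(0,t]$, necessarily upward by convexity in the strip, after which $\gamma^{(1)}_\beta$ is nondecreasing, so $\gamma^{(1)}_\beta(t)\geq\gamma^{(1)}_\beta(t_*)\geq x+\lambda t_*+C t_*^2/2\geq x+\lambda t + C t^2/2$ once we check the last inequality (which holds because $\lambda t_*+Ct_*^2/2$ is, for $|\lambda|$ small relative to $C\delta$, increasing in $t_*$ on the relevant range, or simply because the left side at $t_*$ already exceeds the range of $x$). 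Assembling (i) and (ii) gives \eqref{eq:gamma-1-est} on the whole interval of definition.
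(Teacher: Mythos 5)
There is a genuine gap in the step handling $t$ bounded away from $0$. In case (ii) you write the chain
$\gamma^{(1)}_\beta(t)\geq\gamma^{(1)}_\beta(t_*)\geq x+\lambda t_*+C t_*^2/2\geq x+\lambda t + C t^2/2$,
but the last inequality goes the wrong way: the quadratic $s\mapsto x+\lambda s+Cs^2/2$ has derivative $\lambda+Cs$, which is positive for $|\lambda|$ small and $s$ not too close to $0$, so for $t>t_*$ one gets $x+\lambda t_*+Ct_*^2/2\leq x+\lambda t+Ct^2/2$, the reverse of what you need. Your first stated justification ("increasing in $t_*$") in fact makes the inequality fail, and the second ("the left side at $t_*$ already exceeds the range of $x$") is an unsupported claim: there is no reason $x+\lambda t_*+Ct_*^2/2$ has left the range of $x$ on $M'$ at the moment $\dot\gamma^{(1)}$ first reaches $\ep$. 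The underlying difficulty is that you fixed $C=\tfrac12\min\{C_0,\inf\alpha\}$ independently of the time horizon $T$, so the quadratic can outgrow the linear gain $\ep(t-t_*)$ you actually control. The fix is to take $C$ smaller still, so that $\lambda+Ct\leq\ep$ on $[0,T]$ (e.g.\ $C\leq(\ep-\lambda_0)/T$ with $T$ supplied by the first part of the lemma), and then use $\gamma^{(1)}(t)\geq\gamma^{(1)}(t_*)+\ep(t-t_*)$ rather than monotonicity alone. This is exactly the role of the paper's phrase "at the cost of reducing $C$": the paper first proves the estimate for $\lambda=0$, gets a positive lower bound for $\dot\gamma^{(1)}$ on $[\delta/2,T]$ by compactness, shrinks $C$ so the quadratic fits under the resulting linear growth on that bounded interval, and only then recovers the case of small $|\lambda|$ by continuity/compactness.

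On the first assertion, your trapping/Arz\`ela--Ascoli argument (a complete geodesic trapped in $\overline{M'}$ would contradict the convexity of $x\circ\gamma$) is a valid alternative to the paper's explicit quantitative bound $T=2\ep/C_0+4C_1/\ep$; the paper's route is more elementary and gives an explicit $T$, which is in fact convenient for the $C$-reduction just described, but both work.
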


\begin{proof}
The concavity hypothesis implies that any critical point of $\gamma^{(1)}$ in $M'$ is a strict
local mimimum, and $\dot\gamma^{(1)}$ can only change sign once and do
so non-degenerately since immediately to the left of any zero of
$\dot\gamma^{(1)}$ it is negative, and immediately to the right it is
positive by the concavity. Thus, for any geodesic either the sign of $\dot\gamma^{(1)}$ is
constant (non-zero) or there is a unique point on it with
minimal $\gamma^{(1)}$ in $M'$ (so either in $(M')^\circ$ or on $\pa
M'$).

Moreover, in case the minimum of $\gamma^{(1)}$ is reached at some $t_0$, $\dot\gamma^{(1)}(t)$ has the same sign as $t-t_0$.  Indeed, this is
so for sufficiently small $|t-t_0|$ by either the concavity or by
the minimum being on $\pa M'$, and if it
vanished for some $t>t_0$ (with $t<t_0$ similar), taking the infimum $t_1>t_0$ of the values of $t$ at which
this happens one concludes that $\dot\gamma^{(1)}(t_1)=0$, and
$\dot\gamma^{(1)}(t)>0$ for $0<t<t_0$, which is a contradiction in view
of the concavity hypothesis.

In addition, by the uniform concavity estimate, if
$\dot\gamma^{(1)}(t_1)\geq\ep$, then $\dot\gamma^{(1)}(t)\geq\ep$ for
$t\geq t_1$, and similarly if $\dot\gamma^{(1)}(t_1)\leq-\ep$ then $\dot\gamma^{(1)}(t)\leq-\ep$ for
$t\leq t_1$. Note that by the uniform concavity estimate,
$|\dot\gamma^{(1)}(t)|\leq\ep$ can only hold for an affine parameter
interval $2\ep/C_0$; and if the minimum of $\gamma^{(1)}$ is reached
at $t_0$, then for $|t-t_0|\geq \ep/C_0$ one necessarily has $|\dot\gamma^{(1)}(t)|\geq\ep$.

Taking into account that
$M'$ is compact so $x$ is bounded, we conclude that there exists $T>0$ such that every geodesic $\gamma_\beta$ reaches
$\pa M'$ in both directions in affine parameter $\leq T$: if $|x|\leq
C_1$ on $M'$, say, then this holds with $T=2\ep/C_0+4C_1/\ep$.

Turning to \eqref{eq:gamma-1-est},
it suffices to prove this for $\lambda=0$, and then it follows for sufficiently small $\lambda$ by
compactness taking into account that
it holds near $t=0$ by \eqref{eq:gamma-exp}. For $\lambda=0$, we have
seen that $\dot\gamma^{(1)}(t)$ has the same sign as $t$. Then by compactness one obtains a
positive lower bound for $\dot\gamma^{(1)}$ on any compact subset of
$(0,\infty)$. Since we have the estimate \eqref{eq:gamma-1-est} for
sufficiently small $t$, say $0<t<\delta$, using the positive lower bound
$\dot\gamma^{(1)}(t)$ for $t\geq\delta/2$ proves \eqref{eq:gamma-1-est}
for $t\geq 0$
at the cost of reducing $C$; $t\leq 0$ is analogous.
  \end{proof}

\begin{rem}
We recall from \cite{Uhlmann-Vasy:X-ray} that we needed to work in a sufficiently
small region so that there are no geometric complications, thus there the
interval $[-T,T]$ of integration in $t$, i.e.\ $T$, is such that 
$\ddot\gamma^{(1)}(t)$ is uniformly bounded below by a positive constant in the
region over which we integrate, see
the discussion in \cite{Uhlmann-Vasy:X-ray} above Equation~(3.1), and then
further reduced in Equations~(3.3)-(3.4) so that the map sending $(x,y,\lambda,\omega,t)$ to
the lift of $(x,y,\gamma_{x,y,\lambda,\omega}(t))$ in the resolved
space $\tilde M^2$ with the diagonal being blown up, is a diffeomorphism in $t\geq
0$, as well as $t\leq 0$. In the present paper the appearance of no conjugate points assumptions occurs
in a closely related manner, when dealing with the stationary phase
expansion, though we use the weaker concavity condition
\eqref{eq:concavity-hyp}, so even geometrically we reduce the
conditions impose. In addition, the extra restrictions in
\cite{Uhlmann-Vasy:X-ray} that arise from making the smoothing (thus
`trivial') error removable disappear here.
\end{rem}

We now introduce the weight $\Phi$ for the exponential conjugation of
our normal operator. Below we consider weights $\Phi=\Phi(x)$ which are
decreasing functions of $x$; in the global context, $\Phi(x)=-x$ will
be used, in the scattering context (in which $x>0$) $\Phi(x)=x^{-1}$.
Now, for $\Phi(x)=-x$,
$$
\Phi(\gamma^{(1)}_{x,y,\lambda,\omega}(t))-\Phi(x)\leq-\lambda t-Ct^2/2\leq-\frac{C}{2}\Big(t+\frac{\lambda}{C}\Big)^2+\frac{\lambda^2}{2C}\leq\frac{\lambda^2}{2C}.
$$
Hence, with $\hat\lambda=\lambda/\sqrt{h}$, $\hat t=t/\sqrt{h}$, the
rescaling which plays a key role below,
$$
h^{-1}(\Phi(\gamma^{(1)}_{x,y,\lambda,\omega}(t))-\Phi(x))\leq
-\frac{C}{2}\Big(\hat t+\frac{\hat \lambda}{C}\Big)^2+\frac{\hat
  \lambda^2}{2C},
$$
so for $\hat\lambda$ in a fixed compact set (and $h$ sufficiently
small, as is always assumed),
\begin{equation}\label{eq:Gaussian-damping}
\exp(h^{-1}(\Phi(\gamma^{(1)}_{x,y,\lambda,\omega}(t))-\Phi(x)))
\end{equation}
is uniformly bounded above by a Gaussian in $\hat t$.

We consider now the operator $L$ defined
by
\begin{equation}\label{eq:def-L}
Lv(z)=\int\tilde\chi(x,y,\lambda/h^{1/2},\omega)v(\gamma_{x,y,\lambda,\omega})\,d\lambda\,d\omega,
\end{equation}
with $\tilde\chi$ having compact support in the third variable, thus
localizing to $\lambda\sim h^{1/2}$, and hence to geodesics almost
tangent to the level sets of $x$ at the base point $(x,y)$, for $h$ small. Further
we consider the operator
\begin{equation}\label{eq:mod-normal-op}
A_h=e^{-\Phi(x)/h}L_h Ie^{\Phi(x)/h},
\end{equation}
which is thus
given by
\begin{equation*}\begin{aligned}
A_h f(z)=\int e^{-\Phi(x(z))/h}
&e^{\Phi(x(\gamma_{z,\lambda,\omega}(t)))/h} \\
&\tilde\chi(z,\lambda/h^{1/2},\omega)
f(\gamma_{z,\lambda,\omega}(t))\,dt\,|d\nu|,
\end{aligned}\end{equation*}
{\em where $A_h$ is understood to apply only to $f$ with
  support in $M$, thus for which the $t$-integral is in a fixed finite
  interval, say $[-T,T]$},
where $|d\nu|$ is a smooth positive density in $(\lambda,\omega)$, such
as $|d\lambda\,d\omega|$.
The first step is to prove:

\begin{prop} $A_h\in h\Psihh^{-1}(\tilde M;\cF)$.
  \end{prop}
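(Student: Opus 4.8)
The plan is to write $A_h f$ as an oscillatory integral, apply stationary phase in the $t$ variable (after the Gaussian rescaling $\hat t = t/\sqrt h$), and recognize the result as a semiclassical foliation pseudodifferential operator of the claimed order. First I would substitute the expansion \eqref{eq:gamma-exp} into the formula for $A_h$ and pass to the Fourier representation of $f$, writing $f(\gamma_{z,\lambda,\omega}(t))$ via its Fourier transform. Since $\gamma^{(1)}_{z,\lambda,\omega}(t) - x = \lambda t + \alpha t^2 + t^3\Gamma^{(1)}$ and $\gamma^{(2)}_{z,\lambda,\omega}(t) - y = \omega t + t^2\Gamma^{(2)}$, the phase (pairing the Fourier dual variables $(\xi',\eta')$ against the increments) is $(\gamma^{(1)}-x)\xi' + (\gamma^{(2)}-y)\eta'$ plus the conjugation term $\Phi(\gamma^{(1)}) - \Phi(x)$ divided by $h$. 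After the rescaling $\hat t = t/\sqrt h$, $\hat\lambda = \lambda/\sqrt h$ (so the cutoff $\tilde\chi$ localizes $\hat\lambda$ to a compact set), and the matching semiclassical rescaling $\xi' = \xi/h$, $\eta' = \eta/h^{1/2}$ that is exactly adapted to the foliation quantization \eqref{eq:semicl-foliation-quantize}, the phase in $\hat t$ becomes $\hat t\hat\lambda\,\xi + \alpha\hat t^2\,\xi + h^{1/2}(\cdots) + \hat t\,\omega\cdot\eta + h^{1/2}(\cdots)$, i.e. a nondegenerate quadratic in $\hat t$ with leading coefficient $\alpha\,\xi$, plus lower order in $h^{1/2}$, together with the Gaussian damping factor \eqref{eq:Gaussian-damping}.

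The core step is stationary phase in $\hat t$ uniformly down to $h=0$. The subtlety is that the quadratic coefficient $\alpha(x,y,\lambda,\omega)\xi$ degenerates as $\xi\to 0$, i.e. on the foliation cotangent directions; this is precisely where the $h^{1/2}$-weighting of the $\eta$ variable and the Gaussian damping do their work. I would split into the region $|\xi| \gtrsim |(\xi,\eta)|$, where honest stationary phase in $\hat t$ applies with critical point near $\hat t = -\hat\lambda/(2\alpha) \cdot(1 + O(\xi))$-type corrections and one gains $h^{1/2}$ per order (consistent with $\sigma$ living in $S^m/h^{1/2}S^{m-1}$), producing a symbol with an overall $h$ from the two integrations $d\hat t\, d\hat\lambda$ each contributing $h^{1/2}$ after the $h^{-n/2-k/2}$ normalization; and the region $|\xi| \lesssim |\eta|$, where one instead uses the $\hat t\,\omega\cdot\eta$ term — here $\omega$ is bounded away from $0$ on the relevant geodesics (the tangency is only in $\lambda\sim h^{1/2}$, not $\omega$), so one again has a nondegenerate phase but now in the $\eta$-scaled variables, and the Gaussian factor ensures the $\hat t$ integral over the fixed interval $[-T,T/\sqrt h]$ converges with all its $h$-asymptotics. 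Gluing these with a partition of unity in the direction of $(\xi,\eta)$ and checking that the symbol estimates $|D_z^\alpha D_\zeta^\beta a|\le C\langle\zeta\rangle^{m-|\beta|}$ hold with $m=-1$ gives $A_h \in h\Psihh^{-1}$; the factor $h$ (rather than $h^0$) comes out of the $d\lambda\,d\omega = h^{1/2}\,d\hat\lambda\,d\omega$ together with the $dt = h^{1/2}\,d\hat t$ against the quantization normalization, and one must track it carefully.

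The remaining points are routine: one checks that outside a neighborhood of $t=0$ (equivalently $\hat t$ large) the Gaussian damping \eqref{eq:Gaussian-damping} makes the contribution $O(h^\infty)$, so the $t$-integral over $[-T,T]$ may be localized near $0$ up to negligible errors and the stationary phase analysis above, carried out near $\hat t$ in a fixed compact set (after absorbing the tail), governs the full operator; one also checks smoothness and support properties in $z$ so that $A_h$ is genuinely in the algebra $\Psihh(\tilde M;\cF)$ via the local coordinate description and the coordinate-invariance noted in Section~\ref{sec:semicl}. The main obstacle is unquestionably the uniform-in-$h$ stationary phase when the quadratic coefficient $\alpha\xi$ degenerates, i.e. showing that the apparent loss from $\xi\to0$ is exactly compensated by the $h^{1/2}$-scaling of $\eta$ and the Gaussian damping; this is the heart of why the foliation algebra (with its $h^{1/2}$ weights) is the right setting, and it is where I would spend most of the effort, modeling the argument on the non-semiclassical stationary phase computation in the proof of Proposition~4.2 of \cite{Stefanov-Uhlmann-Vasy:Rigidity-Normal} but keeping $h$ as an explicit parameter throughout.
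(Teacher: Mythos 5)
Your setup (oscillatory integral for the kernel, rescaling $\hat t=t/\sqrt h$, $\hat\lambda=\lambda/\sqrt h$, using the exponential weight to obtain Gaussian damping in $\hat t$) matches the paper, as does the bookkeeping that the change of variables $dt\,d\lambda = h\,d\hat t\,d\hat\lambda$ produces the overall factor of $h$. You also correctly identify the crux: the $\hat t$-Hessian $2\alpha\xi$ of the phase degenerates as $\xi\to 0$. But the workaround you propose for the region $|\xi|\lesssim|\eta|$ does not work. You say that there "one instead uses the $\hat t\,\omega\cdot\eta$ term" because "$\omega$ is bounded away from $0$." This is beside the point: what would need to be bounded away from zero is $\omega\cdot\eta$, and $\omega$ is an \emph{integration} variable that sweeps out the sphere, so $\omega\cdot\eta$ passes through $0$. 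In fact $\eta\cdot\omega=0$ (when $\xi=0$) is precisely the location of the critical set $\{\hat t=0,\ \xi\hat\lambda+\eta\cdot\omega=0\}$ of the phase, so there is genuinely a stationary point there; a one-variable non-stationary-phase argument in $\hat t$ cannot be run.

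The resolution in the paper is different and is the key idea you are missing: one does stationary phase \emph{jointly} in the two variables $(\hat t,\theta^\parallel)$, where $\theta^\parallel$ is the component of $(\hat\lambda,\omega)$ parallel to the direction $(\hat\xi,\hat\eta)=(\xi,\eta)/|(\xi,\eta)|$. The phase, restricted to these two variables, has Hessian (at $\hat t=0$, $h=0$)
\[
|(\xi,\eta)|\begin{pmatrix} 2\hat\xi\alpha & 1\\ 1 & 0\end{pmatrix},
\]
whose determinant is $-|(\xi,\eta)|^2$, nondegenerate \emph{uniformly}, including when $\hat\xi=0$; the off-diagonal coupling between $\hat t$ and $\theta^\parallel$ saves the day, not any cancellation between $\alpha\xi$ and the $\eta$-scaling. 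The $2$-dimensional stationary phase then produces the order $-1$ in $\langle(\xi,\eta)\rangle$ directly. Note also that to reduce to $|\hat t|$ bounded you need a non-stationary-phase argument for $|\hat t|\geq 1$ (carried out in the paper by explicitly computing the phase gradient there and using that $T_0$ is small), and for $t$ bounded away from $0$ (so $\hat t\sim T_0/\sqrt h$) the Gaussian damping yields $O(e^{-\epsilon/h})$ smallness in $h$ but not, by itself, the required $\langle(\xi,\eta)\rangle^{-\infty}$ decay; for the latter the no-conjugate-points hypothesis is used to ensure the phase is non-stationary in $(t,\lambda,\omega)$, and your ``routine'' dismissal of that region glosses over a genuinely needed geometric input.
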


\begin{proof}
The operator $A_h$ is the left quantization of the (a priori
tempered distributional) symbol $a_{h}$ where $a_{h}$ is
the inverse Fourier transform in the second variable $z'$ of the integral kernel: if $K_{A_h}$ is the
Schwartz kernel of $A_h$, then in the sense of oscillatory integrals (or
directly if the order of $a$ is sufficiently low)
$$
K_{A_h}(z,z')=(2\pi)^{-n}h^{-n/2-1/2}\int e^{i(x-x')\xi/h+(y-y')\eta/h^{1/2}}a_{h}(x,y,\xi,\eta)\,d\xi\,\d\eta,
$$
i.e.\ $(2\pi)^{-n}$ times the semiclassical foliation Fourier transform in $(\xi,\eta)$ of
$$
(x,y,\xi,\eta)\mapsto e^{ix\cdot\xi/h+iy\cdot\eta/h^{1/2}}
a_{h}(z,\zeta),
$$
so taking the
semiclassical foliation inverse
Fourier transform in $(x',y')$ yields
$(2\pi)^{-n}a_{h}(x,y,\xi,\eta)e^{ix\cdot\xi/h+iy\cdot\eta/h^{1/2}}$, i.e.
\begin{equation}\label{eq:aj-in-terms-of-kernel}
a_{h}(z,\zeta)=(2\pi)^{n}e^{-ix\cdot\xi/h-iy\cdot\eta/h^{1/2}}(\cF_{h,\cF}^{-1})_{(x',y')\to(\xi,\eta)}K_{A_h}(x,y,x',y').
\end{equation}
Here we are using local coordinates; we comment below on the
considerations when $z$ and $z'$ are far apart and cannot be analyzed
in the same coordinate chart.

Now,
\begin{equation*}\begin{aligned}
K_{A_h}(x,y,x',y')&=\int e^{-\Phi(x)/h} e^{\Phi(x(\gamma_{z,\lambda,\omega}(t)))/h}\tilde\chi(z,\lambda/h^{1/2},\omega)\\
&\qquad\qquad\delta(z'-\gamma_{z,\lambda,\omega}(t))\,dt\,|d\nu|\\
&=
(2\pi)^{-n} h^{-n/2-1/2}
\int e^{-\Phi(x)/h} e^{\Phi(x(\gamma_{z,\lambda,\omega}(t)))/h}\tilde\chi(z,\lambda/h^{1/2},\omega)\\
&\qquad\qquad e^{-i\xi'\cdot(x'-\gamma^{(1)}_{z,\lambda,\omega}(t))/h}e^{-i\eta'\cdot(y'-\gamma^{(2)}_{z,\lambda,\omega}(t))/h^{1/2}}\,dt\,|d\nu|\,|d\xi'|\,|d\eta'|;
\end{aligned}\end{equation*}
as remarked above, the $t$ integral is actually over a fixed finite
interval, say $|t|<T$, or one may explicitly insert a compactly supported cutoff
in $t$ instead. (So the only non-compact domain of integration is in
$(\xi',\eta')$, corresponding to the Fourier transform.)
Thus, taking the semiclassical foliation inverse Fourier transform in $x',y'$ and evaluating at $\xi,\eta$
gives
\begin{equation}\begin{aligned}\label{eq:semicl-full-symbol}
a_{h}(x,y,\xi,\eta)=
\int e^{-\Phi(x)/h} &e^{\Phi(x(\gamma_{z,\lambda,\omega}(t)))/h}\tilde\chi(z,\lambda/h^{1/2},\omega)\\
&\qquad 
e^{i\xi\cdot(\gamma^{(1)}_{z,\lambda,\omega}(t)-x)/h}e^{i\eta\cdot(\gamma^{(2)}_{z,\lambda,\omega}(t)-y)/h^{1/2}}\,dt\,|d\nu|.
\end{aligned}\end{equation}
The proof of the proposition is completed by showing that the right
hand side is actually $h$ times a symbol of order $-1$.

Notice that technically we are using local coordinates in
\eqref{eq:semicl-full-symbol}. For the
semiclassical foliation pseudodifferential operators for symbolic statements we should be
considering $z,z'$ in the same chart as well as when $z$
and $z'$ are apart and cannot be analyzed in the same chart. In the
latter case
$|t|$ bounded below by a positive constant, and
we show that $K_{A_h}$ is smooth and $O(h^\infty)$. This is
implied by the semiclassically Fourier transformed, in $z'$,
expression being Schwartz, i.e.\ $a_h$ (and its derivatives) being
$O(h^\infty\langle(\xi,\eta)\rangle^{-\infty})$; for this we do not
need to explicitly consider a coordinate chart in $z=(x,y)$.
We prove this decay below by
stationary phase considerations in $(\lambda,\omega,t)$, meaning that
the phase is not actually stationary; in this case the oscillatory factor
$e^{-i(\xi x/h+\eta\cdot y/h^{1/2})}$ is actually irrelevant.

We first consider the $|t|$ small behavior, say $|t|<T_0$, so a single chart can be
used in the analysis ($z,z'$ are in the same chart).
We change the variables of integration to $\hat t=t/\sqrt{h}$, and
$\hat\lambda=\lambda/{\sqrt{h}}$, so the $\hat\lambda$ integral is in fact over
a fixed compact interval, but the $\hat t$ one is over $|\hat t|<T_0/\sqrt{h}$
which grows as $h\to 0$; in this process we obtain an additional
factor of $h$ from the change of density.
We deduce that the
phase is
\begin{equation*}\begin{aligned}
\xi\cdot&(\gamma^{(1)}_{z,\lambda,\omega}(t)-x)/h+\eta\cdot(\gamma^{(2)}_{z,\lambda,\omega}(t)-y)/h^{1/2}\\    
&=\xi(\hat\lambda\hat t+\alpha\hat t^2+h^{1/2}\hat
t^3\Gamma^{(1)}(x,y,h^{1/2}\hat\lambda,\omega,h^{1/2}\hat t))\\
&\qquad+\eta\cdot(\omega\hat t+h^{1/2}\hat t^2\Gamma^{(2)}(x,y,h^{1/2}\hat\lambda,\omega,h^{1/2}\hat t)),
\end{aligned}\end{equation*}
while the exponent of the exponential damping factor (which we regard as a Schwartz
function, part of the amplitude, when one regards $\hat t$ as a
variable on $\RR$) is
\begin{equation*}\begin{aligned}
-\Phi(x)/h&+\Phi(x(\gamma_{z,\lambda,\omega}(t)))/h\\
&=x/h-\gamma^{(1)}_{x,y,\lambda,\omega}(t)/h\\
&=
-h^{-1}(\lambda t+\alpha
t^2+t^3\Gamma^{(1)}(x,y,\lambda,\omega,t))\\
&=-(\hat\lambda\hat t+\alpha\hat
t^2+\hat t^3 h^{1/2}\hat\Gamma^{(1)}(x,y,h^{1/2}\hat\lambda,\omega,h^{1/2}\hat t)),
\end{aligned}\end{equation*}
with $\hat\Gamma^{(1)}$ a smooth function. Thus, after the rescaling
of $t$ and $\lambda$ to $\hat t$ and $\hat\lambda$, the integrand of
\eqref{eq:semicl-full-symbol} is a smooth function of all
variables. In view of the Gaussian decay of the exponential damping
factor around \eqref{eq:Gaussian-damping}, the lack of compactness in
the $\hat t$ integration domain is not an issue, and we conclude that for $\xi,\eta$ in a
bounded region we conclude that $a_h$ is $h$ times a $\CI$ function.

We now consider the behavior of $|(\xi,\eta)|\to\infty$ to complete
showing that $a_h$ is a symbol.
The only subtlety in applying the stationary phase lemma is that the
domain of integration in $\hat t$ is not compact, so we need to
explicitly deal with the region $|\hat t|\geq 1$, say, assuming that the
amplitude is Schwartz in $\hat t$, uniformly in the other variables
(as it is in our case thanks to the exponential weight factor). Notice that as long
as the first derivatives of the phase in the integration variables
have a lower bound $c |(\xi,\eta)|\,|\hat t|^{-k}$ for some $k$, and for some $c>0$, the standard
integration by parts argument gives the rapid decay of the integral in
the large parameter $|(\xi,\eta)|$. At $h=0$ the phase
is
$\xi(\hat\lambda\hat t+\alpha\hat t^2)+\hat t\eta\cdot\omega$;
if $|\hat
t|\geq 1$, say, the $\hat\lambda$ derivative is $\xi\hat t$, which is
thus bounded below by $|\xi|$ in magnitude, so the only place where
one may not have rapid decay is at $\xi=0$ (meaning, in the spherical
variables, $\frac{\xi}{|(\xi,\eta)|}=0$). In this region one may use
$|\eta|$ as the large variable to simplify the notation slightly. The
phase is then with $\hat\xi=\frac{\xi}{|\eta|}$, $\hat\eta=\frac{\eta}{|\eta|}$,
$$
|\eta|(\hat\xi(\hat\lambda\hat t+\alpha\hat t^2)+\hat t\hat\eta\cdot\omega),
$$
with parameter differentials (ignoring the overall $|\eta|$ factor)
$$
\hat\xi\hat t\,d\hat\lambda,(\hat t\hat\eta+\hat
t^2\hat\xi\pa_\omega\alpha)\cdot
\,d\omega,(\hat\xi(\hat\lambda+2\alpha\hat
t)+\hat\eta\cdot\omega)\,d\hat t.
$$
With $\hat\Xi=\hat\xi\hat t$ and $\rho=\hat t^{-1}$ these are
$$
\hat\Xi\,d\hat\lambda,\hat t(\hat\eta+\hat\Xi\pa_\omega\alpha)\cdot
\,d\omega,(\hat\Xi(\rho\hat\lambda+2\alpha)+\hat\eta\cdot\omega)\,d\hat t,
$$
and now for critical points $\hat\Xi$ must vanish (as we already knew
from above), then the last of these gives that $\hat\eta\cdot\omega$
vanishes, but then the second gives that there cannot be a critical
point (in $|\hat t|\geq 1$). While this argument was at $h=0$, the full
phase derivatives
are
\begin{equation*}\begin{aligned}
&(\hat\xi\hat t(1+h^{1/2}\hat t\pa_\lambda\alpha+h\hat
t^2\pa_\lambda\Gamma^{(1)})+\hat\eta\cdot h\hat
t^2\pa_\lambda\Gamma^{(2)})\,d\hat\lambda,\\
&(\hat t\hat\eta+h^{1/2}\hat t^2\hat\eta\cdot\pa_\omega\Gamma^{(2)}+\hat
t^2\hat\xi\pa_\omega\alpha+h^{1/2}\hat t^3\hat\xi\pa_\omega\Gamma^{(1)})\cdot
\,d\omega,\\
&(\hat\xi(\hat\lambda+2\alpha\hat
t+3h^{1/2}\hat t^2\Gamma^{(1)}+h\hat
t^3\pa_t\Gamma^{(1)}) +\hat\eta\cdot\omega+2h^{1/2}\hat t\Gamma^{(2)}+h\hat
t^2\pa_t\Gamma^{(2)})\,d\hat t, 
\end{aligned}\end{equation*}
i.e.
\begin{equation*}\begin{aligned}
&(\hat\Xi(1+t\pa_\lambda\alpha+t^2\pa_\lambda\Gamma^{(1)})+\hat\eta\cdot t^2\pa_\lambda\Gamma^{(2)})\,d\hat\lambda, \\
&\hat t(\hat\eta+\hat\eta\cdot t\pa_\omega\Gamma^{(2)}+\hat\Xi\pa_\omega\alpha+t\hat\Xi\pa_\omega\Gamma^{(1)})\cdot
\,d\omega,\\
&(\hat\Xi(\hat\lambda\rho+2\alpha+3t\Gamma^{(1)}+
t^2\pa_t\Gamma^{(1)})+\hat\eta\cdot\omega+2t\Gamma^{(2)}+t^2\pa_t\Gamma^{(2)})\,d\hat t,
\end{aligned}\end{equation*}
and now all the additional terms are small if $T_0$ is small, where we assume
$|t|<T_0$, so the lack of critical points in the $h=0$ computation
implies the analogous statement (in $|\hat t|>1$) for the general
computation {\em assuming $T_0$ is sufficiently small}.

Now, if $t\in [T_0,T]$ is not so small, the same result can be achieved under a
no-conjugate points assumption, i.e.\ that the Jacobian
$\frac{\pa\gamma}{\pa(t,\lambda,\omega)}$ is full rank for $t$ away
from $0$. Notice that we might use different coordinate charts for
$z,z'$ as discussed at the end of the paragraph of
\eqref{eq:semicl-full-symbol}, with the factor $e^{-i(\xi
  x/h+\eta\cdot y/h^{1/2})}$ of the integrand irrelevant if one is to
prove rapid decay.
In
this case one can run the non-stationary phase argument directly for $t$ (as
opposed to $\hat t$) away from
$0$ and $(\lambda,\omega)$, showing that there are no stationary points, and thus, as
$|(\xi,\eta)|\to\infty$ or $h\to 0$, one can reduce to the case $\hat t=0$ discussed
below (as there are no other non-trivial contributions). Concretely, we need to keep in mind that the exponential weight
is bounded by $e^{-\ep/h}$ for some $\ep>0$ when $t$ is bounded away
from $0$ (and $h$ is sufficiently small), thus is rapidly decaying as
$h\to 0$; in particular this assures the smoothness of $a_h$, and its
rapid decay in $h$, for bounded
$(\xi,\eta)$. Now, there is $C_0>0$ such that if $|\xi|/h^{1/2}>C_0|\eta|$
then $\pa_t$ of the phase is bounded from below by a positive constant
multiple of $|\xi|/h$, since $\pa_t\gamma^{(1)}\neq 0$ by the
convexity properties of the foliation. Thus, in this case we deduce
rapid decay of the integral in $|\xi|/h$, hence also in
$|\eta|/h^{1/2}$. So let $\tilde\xi=\xi/h^{1/2}$, and assume that
$|\tilde\xi|<2C_0|\eta|$. Then the phase becomes
$$
h^{-1/2}(\tilde\xi(\gamma^{(1)}_{x,y,\lambda,\omega}(t)-x)+\eta\cdot (\gamma^{(2)}_{x,y,\lambda,\omega}(t)-y)),
$$
which is a standard $h^{1/2}$-semiclassical phase, whose
non-stationarity is assured by the no-conjugate points
hypothesis. Due to the exponential weight, the amplitude is rapidly
decreasing in $h$ regardless of the singular $\lambda$-dependence of
$\tilde\chi$, which makes stationary phase applicable, giving the
desired result of rapid decay in $|\eta|/h^{1/2}$, hence also in
$|\tilde\xi|/h^{1/2}=|\xi|/h^{1/2}$ in view of the constraint on $\tilde\xi$.

This discussion implies that we may work in $|\hat t|<2$, say, and one can use the standard {\em parameter-dependent} stationary phase lemma, see
e.g.\ \cite[Theorem~7.7.6]{Hor} for our stationary phase computation
in $(\hat t,\hat \lambda,\omega)$. At $h=0$, the
stationary points of the phase are $\hat t=0$,
$\xi\hat\lambda+\eta\cdot\omega=0$, which remain critical points for
$h$ non-zero due to the $h^{1/2}\hat t^2$ vanishing of the other terms, and
when $|\hat t|<2$ and $h$ is sufficiently small, so $h^{1/2}\hat t$ is small, there are no other critical
points. (One can see this in a different way: above we worked with
$|\hat t|\geq 1$, but for any $\ep>0$, $|\hat t|\geq\ep$ would have
worked equally.) These critical points lie on a smooth codimension 2
submanifold of the parameter space. For the following argument it is
useful to consider $(\xi,\eta)$ jointly, and write
$\hat\xi=\frac{\xi}{|(\xi,\eta)|}$,
$\hat\eta=\frac{\eta}{|(\xi,\eta)|}$. Moreover, we write
$\theta=(\hat\lambda,\omega)$, and decompose it into a parallel and
orthogonal component relative to $(\hat\xi,\hat\eta)$:
$\theta^\parallel=(\hat\xi,\hat\eta)\cdot(\hat\lambda,\omega)$, resp.\
$\theta^\perp$. At $h=0$, the $(\hat t,\theta^\parallel)$-Hessian matrix of the phase
$$
|(\xi,\eta)|(\hat\xi(\hat\lambda\hat t+\alpha\hat t^2)+\hat
t\hat\eta\cdot\omega)=|(\xi,\eta)|(\hat t\hat\theta^\parallel+\hat\xi\alpha\hat t^2)
$$
within
$\hat t=0$ (where the critical set lies) is
$$
|(\xi,\eta)|\begin{pmatrix}
  2\hat\xi\alpha&1\\1&0\end{pmatrix},
$$
which is always invertible. This also implies, by continuity (and
homogeneity in $(\xi,\eta)$) of the Hessian the invertibility for
small $h$.
We thus use the stationary phase lemma in the
$(\hat t,\theta^\parallel)$ variables, which shows that $a_h$ is ($h$
times, due to the integration variable change, already mentioned for
the finite $\xi,\eta$ discussion!) a
symbol of order $-1$, since the stationary phase is with respect to a 2-dimensional
space.
\end{proof}

We now proceed to compute the semiclassical principal symbol:

\begin{prop}
There exists $\tilde\chi$ of compact support such that the operator
$A_h\in h\Psihh^{-1}(\tilde M;\cF)$ is elliptic on $M$.
  \end{prop}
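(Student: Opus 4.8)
The previous proposition identifies $A_h$ as the left quantization of the symbol $a_h$ given by the oscillatory integral \eqref{eq:semicl-full-symbol}, and shows that, after rescaling $t=\sqrt h\,\hat t$, $\lambda=\sqrt h\,\hat\lambda$, the contribution concentrates (as $h\to0$, $|(\xi,\eta)|\to\infty$) at the stationary set $\hat t=0$, $\xi\hat\lambda+\eta\cdot\omega=0$, with a nondegenerate $(\hat t,\theta^\parallel)$-Hessian. So the plan is: first, pick $\tilde\chi$ so that the stationary-phase leading term is nonvanishing; second, carry out the stationary-phase expansion explicitly at $h=0$ to read off $\sigma_{-1}(h^{-1}A_h)$; third, verify that the resulting symbol satisfies the ellipticity bound $|\sigma|\ge c\langle(\xi,\eta)\rangle^{-1}$ on a neighborhood of $M$, so the elliptic parametrix machinery of Section~\ref{sec:semicl} applies.

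Concretely, I would evaluate \eqref{eq:semicl-full-symbol} by stationary phase in $(\hat t,\theta^\parallel)$ at $h=0$. Setting $h=0$, the damping factor becomes $\exp(-(\hat\lambda\hat t+\alpha\hat t^2))$, the phase is $|(\xi,\eta)|(\hat t\,\hat\theta^\parallel+\hat\xi\alpha\hat t^2)$, and the remaining $\theta^\perp$ integral pairs with the amplitude $\tilde\chi(x,y,\hat\lambda,\omega)$. The critical point is $\hat t=0$, $\hat\theta^\parallel=0$; since at the critical point the phase and its $\hat\theta^\perp$-free part vanish and the damping factor equals $1$, the leading stationary-phase term is $h\cdot|(\xi,\eta)|^{-1}$ times $(2\pi)(\det')^{-1/2}e^{i\pi\sigma/4}$ (from the $2$-dimensional Hessian $\begin{pmatrix}2\hat\xi\alpha&1\\1&0\end{pmatrix}$, whose determinant is $-1$, signature $0$) times the restriction of $\tilde\chi$ to the critical set integrated over $\theta^\perp$. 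Thus
\begin{equation*}
\sigma_{-1}(h^{-1}A_h)(x,y,\xi,\eta)=\frac{c_n}{|(\xi,\eta)|}\int_{\{\hat\xi\hat\lambda+\hat\eta\cdot\omega=0\}}\tilde\chi(x,y,\hat\lambda,\omega)\,d\mu(\hat\lambda,\omega),
\end{equation*}
the integral being over the hyperplane $\theta^\perp$ with a natural density. Here $\alpha=\alpha(x,y,0,\omega)>0$ drops out of the modulus of the Hessian determinant, which is why only convexity, not a quantitative strength, enters at this stage.

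The final step is ellipticity: I need $\tilde\chi\ge0$, compactly supported in $(\hat\lambda,\omega)$, and chosen so that for each $(x,y)$ near $M$ and each direction $(\hat\xi,\hat\eta)$ the integral of $\tilde\chi$ over the hyperplane $\hat\xi\hat\lambda+\hat\eta\cdot\omega=0$ is bounded below by a positive constant. Since $\dim M\ge3$, the leaf coordinate $y$ is at least $2$-dimensional, so $\omega$ ranges over $\ge2$ dimensions and the hyperplane is $\ge2$-dimensional; taking $\tilde\chi$ to be, say, a fixed positive bump function on a ball in $(\hat\lambda,\omega)$ (independent of $(x,y)$, or smoothly cut off near $M$) makes the hyperplane integral continuous and strictly positive in $(x,y,\hat\xi,\hat\eta)$, hence bounded below by compactness. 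This gives $|\sigma_{-1}(h^{-1}A_h)|\ge c\langle(\xi,\eta)\rangle^{-1}$ on a neighborhood of $M$ for $h$ small, i.e.\ $A_h\in h\Psihh^{-1}(\tilde M;\cF)$ is elliptic on $M$ in the sense of Section~\ref{sec:semicl}.

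**Main obstacle.** The routine-but-delicate point is justifying the stationary-phase expansion uniformly down to $h=0$ with a noncompact $\hat t$-domain and a $\tilde\chi$ that is \emph{singular} (oscillating at scale $h^{1/2}$) in $\lambda$; this is exactly what the previous proposition's estimates were set up to handle — the Gaussian damping around \eqref{eq:Gaussian-damping} tames the $\hat t$ tail, and after rescaling $\tilde\chi$ becomes a smooth amplitude in $\hat\lambda$ — so here it suffices to invoke the parameter-dependent stationary phase lemma (\cite[Theorem~7.7.6]{Hor}) in the $(\hat t,\theta^\parallel)$ variables, with $(x,y,\theta^\perp,\hat\xi,\hat\eta,h)$ as parameters, exactly as in the proof of the preceding proposition. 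The genuinely new content is only the bookkeeping of the leading coefficient and the elementary positivity argument for the hyperplane integral.
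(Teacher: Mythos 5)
There is a genuine gap in the ellipticity step. You correctly extract the leading stationary-phase coefficient of $a_0=\sigma_{-1}(h^{-1}A_h)$ as $|(\xi,\eta)|\to\infty$ — namely a positive multiple of $\int_{\theta^\parallel=0}\tilde\chi\,d\mu$, which is strictly positive when $\tilde\chi\geq 0$, $\tilde\chi(0)>0$ and $n\geq 3$ — and this matches the paper's argument for the large-$(\xi,\eta)$ regime. But semiclassical ellipticity here requires $|a_0(z,\zeta)|\geq c\langle\zeta\rangle^{-1}$ for \emph{all} $\zeta$, not only for $|\zeta|$ large: the principal symbol in $S^m/h^{1/2}S^{m-1}$ is the full $h=0$ symbol, and for bounded $(\xi,\eta)$ the stationary phase lemma gives no information at all (there is no large parameter). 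Your proposal identifies $\sigma_{-1}$ with the stationary-phase leading term $c_n|(\xi,\eta)|^{-1}\int\tilde\chi$, which is only the $|(\xi,\eta)|\to\infty$ asymptotic, not the symbol itself; a priori $a_0(x,y,\xi,\eta)$ could vanish somewhere at finite $(\xi,\eta)$ even with that asymptotic controlled.

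The paper explicitly flags this issue ("for general finite $(\xi,\eta)$ it is harder to compute $a_0$ explicitly for general $\chi$") and closes it by a non-asymptotic, exact evaluation: taking $\chi(\hat\lambda)=e^{-\hat\lambda^2/(2\alpha)}$ makes the $\hat t$- and $\hat\lambda$-Gaussian integrals explicit, yielding
$$a_0 = c'\int (1+\xi^2)^{-1} e^{-(\eta\cdot\omega)^2/(2\alpha(1+\xi^2))}\,d\omega,$$
which is manifestly positive for \emph{every} $(\xi,\eta)$ and has the right $\langle\zeta\rangle^{-1}$ decay; then a Schwartz-small perturbation replaces the Gaussian by a compactly supported $\chi$ while preserving the lower bound. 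To repair your argument you would need to add precisely this non-asymptotic step, or some other mechanism (e.g.\ a direct sign/monotonicity argument) that controls $a_0$ at bounded frequencies.
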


\begin{proof} For the semiclassical principal symbol computation  
we may simply set $h=0$ in the above rescaled expression used
for the stationary phase argument.
Thus,  with $\tilde\chi=\chi(\lambda/h^{1/2})=\chi(\hat\lambda)$,
we have that
\begin{equation}\begin{aligned}\label {eq:aj-1-form-pre-xi}
a_h(x,y,\xi,\eta)
=
h\int 
&e^{i(\xi h^{-1}(\gamma^{(1)}_{x,y,x\hat\lambda,\omega}(h^{1/2}\hat
  t)-x)+\eta h^{-1/2} x^{-1}(
  \gamma^{(2)}_{x,y,x\hat\lambda,\omega}(h^{1/2}\hat t)-y))}\\
&\qquad e^{-(\hat\lambda\hat t+\alpha\hat
t^2)}\chi(\hat\lambda)
\,d\hat t\,d\hat \lambda\,d\omega\\
=h\int &e^{i(\xi(\hat\lambda\hat t+\alpha\hat t^2+h^{1/2}\hat
t^3\Gamma^{(1)}(x,y,h^{1/2}\hat\lambda,\omega,h^{1/2}\hat t))+\eta\cdot(\omega\hat
t+h^{1/2}\hat t^2\Gamma^{(2)}(x,y,h^{1/2}\hat\lambda,\omega,h^{1/2}\hat t)))}\\
&\qquad e^{-(\hat\lambda\hat t+\alpha\hat
t^2)}\chi(\hat\lambda)\,d\hat t\,d\hat \lambda\,d\omega,
\end{aligned}\end{equation}
up to errors that are
$O(h^{1/2}\langle\xi,\eta\rangle^{-1})$ relative to
the a priori order, $-1$, arising from the $0$th order symbol in
the oscillatory integral and the 2-dimensional space in which the
stationary phase lemma is applied, and semiclassical order $-1$,
corresponding to the factor of $h$ in front. Factoring out the overall
$h$, this in turn becomes, modulo
$O(h^{1/2})$
errors, i.e.\ at the semiclassical foliation principal symbol level,
\begin{equation}\begin{aligned}
    a_0(x,y,\xi,\eta)=\int &e^{i(\xi(\hat\lambda\hat t+\alpha\hat t^2)+\eta\cdot\omega\hat
t)} e^{-(\hat\lambda\hat t+\alpha\hat
t^2)}\chi(\hat\lambda)\,d\hat t\,d\hat \lambda\,d\omega.
\end{aligned}\end{equation}

Now computing the principal symbol of this, i.e.\ the behavior as
$|(\xi,\eta)|\to\infty$, we consider
the critical points of the phase, $\hat t=0$,
$\theta^\parallel\equiv\hat\xi\hat\lambda+\hat\eta\cdot\omega=0$, where $\theta^\perp$ is the
variable along the critical set (where $\hat t$ and $\theta^\parallel$
vanish), which gives, up to an
overall elliptic factor, keeping in mind that $\hat\lambda$ depends on
$\theta^\perp$ along this equator (namely along $\theta^\parallel=0$),
$$
\int_{\sphere^{n-2}}\chi(\hat\lambda(\theta^\perp)) \,d\theta^\perp,
$$
which is elliptic for $\chi\geq 0$ with $\chi(0)>0$ since the
codimension one planes (intersected with a sphere) $\theta^\parallel=0$ and
$\hat\lambda=0$ necessarily intersect in at least a line (intersected
with the sphere) as the dimension is $n\geq 2+1=3$.

This of course implies that for finite, but sufficiently large,
$(\xi,\eta)$, the semiclassical symbol $a_0$ is elliptic. For general
finite $(\xi,\eta)$ it is harder to compute $a_0$ explicitly for
general $\chi$. However, when $\chi$ is a Gaussian, the computation is
straightforward. We write
\begin{equation*}\begin{aligned}
    a_0(x,y,\xi,\eta)=\int &e^{-(\alpha(1-i\xi)\hat t^2+\hat
      t(\hat\lambda(1-i\xi)-i\eta\cdot\omega)}
    \chi(\hat\lambda)\,d\hat t\,d\hat \lambda\,d\omega\\
    =\int &e^{-\alpha(1-i\xi)(\hat
      t+\frac{\hat\lambda(1-i\xi)-i\eta\cdot\omega}{2\alpha(1-i\xi)})^2}e^{\frac{(\hat\lambda(1-i\xi)-i\eta\cdot\omega)^2}{4\alpha(1-i\xi)}}
    \chi(\hat\lambda)\,d\hat t\,d\hat \lambda\,d\omega\\
    =c\int &\alpha^{-1/2}(1-i\xi)^{-1/2}e^{\frac{(\hat\lambda(1-i\xi)-i\eta\cdot\omega)^2}{4\alpha(1-i\xi)}}
    \chi(\hat\lambda)\,d\hat \lambda\,d\omega\\
    =c\int&\alpha^{-1/2}(1-i\xi)^{-1/2}e^{\frac{\hat\lambda^2(1-i\xi)}{4\alpha}-i\frac{\hat\lambda}{2\alpha}\eta\cdot\omega-\frac{(\eta\cdot\omega)^2}{4\alpha(1-i\xi)}}\chi(\hat\lambda)\,d\hat \lambda\,d\omega,
  \end{aligned}\end{equation*}
with $c$ a non-zero constant.
Now a particularly helpful choice is
$\chi(\hat\lambda)=e^{-\hat\lambda^2/(2\alpha)}$, for then we have
\begin{equation*}\begin{aligned}
    a_0(x,y,\xi,\eta)=c\int&\alpha^{-1/2}(1-i\xi)^{-1/2}e^{-\frac{\hat\lambda^2(1+i\xi)}{4\alpha}-i\frac{\hat\lambda}{2\alpha}\eta\cdot\omega-\frac{(\eta\cdot\omega)^2}{4\alpha(1-i\xi)}}\,d\hat
    \lambda\,d\omega\\
    =c\int&\alpha^{-1/2}(1-i\xi)^{-1/2}e^{-\frac{1+i\xi}{4\alpha}(\hat\lambda+i\frac{\eta\cdot\omega}{1+i\xi})^2-\frac{(\eta\cdot\omega)^2}{4\alpha(1+i\xi)}-\frac{(\eta\cdot\omega)^2}{4\alpha(1-i\xi)}}\,d\hat 
    \lambda\,d\omega \\
        =c'\int&(1+\xi^2)^{-1}e^{-\frac{(\eta\cdot\omega)^2}{2\alpha(1+\xi^2)}}\,d\omega 
  \end{aligned}\end{equation*}
and the integral is now positive since the integrand is such, while
$c'$ is a new non-zero constant. It
follows immediately that the same positivity property is maintained if
$\chi$ is close to the Gaussian in the space of Schwartz functions,
which can be achieved by taking a compactly supported $\chi$.
\end{proof}

In view
of the errors of the elliptic parametrix construction being small in the semiclassical Sobolev spaces, for
sufficiently small $h$ (but $h$ can be fixed to such a small value, so
$A_h$ is a standard pseudodifferential operator then, and the Sobolev
spaces are standard Sobolev spaces with an equivalent norm!), as
discussed in Section~\ref{sec:semicl}, {\em this proves Theorem~\ref{thm:main}.}

The scattering version is quite similar; recall that $x=\xt+c$ in
terms of the original foliation function $\xt$. The cutoff scaling we
use in this case is
\begin{equation}\label{eq:tilde-chi-sc}
\tilde\chi(z,\lambda/(xh^{1/2}),\omega).
\end{equation}
Thus,
writing scattering covectors
as $\xisc\,\frac{dx}{x^2}+\etasc\,\frac{dy}{x}$, i.e.\ substituting
$$
\xisc=x^2\xi,\ \etasc=x\eta,
$$
into \eqref{eq:semicl-full-symbol}
\begin{equation}\begin{aligned}\label{eq:semicl-sc-full-symbol}
a_{h}(x,y,\xisc,\etasc)=
\int e^{-\Phi(x)/h} &e^{\Phi(x(\gamma_{z,\lambda,\omega}(t)))/h}\tilde\chi(z,\lambda/(xh^{1/2}),\omega)\\
&\qquad 
e^{ix^{-2}\xisc\cdot(\gamma^{(1)}_{z,\lambda,\omega}(t)-x)/h}e^{ix^{-1}\etasc\cdot(\gamma^{(2)}_{z,\lambda,\omega}(t)-y)/h^{1/2}}\,dt\,|d\nu|,
\end{aligned}\end{equation}
with $\Phi(x)=x^{-1}$ in this case.

\begin{prop}
  Let $\tilde M_c=\tilde M\cap \{\tilde x\geq -c\}=\tilde M\cap\{x\geq
  0\}$. Then
  $A_h\in h\Psischh^{-1,-2}(\tilde M_c;\cF)$.
\end{prop}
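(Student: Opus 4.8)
The plan is to run the argument of the previous proposition again, now carrying the scattering structure along near $x=0$. Starting from the full symbol \eqref{eq:semicl-sc-full-symbol} (with $\Phi(x)=x^{-1}$), the natural rescaling here is $\hat\lambda=\lambda/(xh^{1/2})$, $\hat t=t/(xh^{1/2})$: the cutoff \eqref{eq:tilde-chi-sc} localizes $\hat\lambda$ to a fixed compact set, and substituting $\xisc=x^2\xi$, $\etasc=x\eta$ together with \eqref{eq:gamma-exp} turns the oscillatory factor into
$$
e^{i\big(\xisc(\hat\lambda\hat t+\alpha\hat t^2+xh^{1/2}\hat t^3\Gamma^{(1)})+\etasc\cdot(\omega\hat t+xh^{1/2}\hat t^2\Gamma^{(2)})\big)},
$$
which is exactly the boundaryless phase with $\xi,\eta$ replaced by $\xisc,\etasc$, except that the error terms now carry an extra factor of $x$, hence are even more favorable as $x\to 0$. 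The key point is that $\Phi(x)=x^{-1}$ is precisely the weight dual to the scattering covector $\xisc\,\frac{dx}{x^2}$, so the $x^{-1}/h$ singularity in $h^{-1}(\Phi(\gamma^{(1)}_{z,\lambda,\omega}(t))-\Phi(x))$ cancels exactly, leaving $-(\hat\lambda\hat t+\alpha\hat t^2)+O(xh^{1/2})$, a smooth function of all variables; and Lemma~\ref{lemma:gamma-1-est} together with the convexity of $\Phi$ (using $x>\lambda^2/(2C)$, valid for $\hat\lambda$ in a compact set and $h$ small) shows that $\exp\big(h^{-1}(\Phi(\gamma^{(1)}_{z,\lambda,\omega}(t))-\Phi(x))\big)$ is bounded above by a Gaussian in $\hat t$, uniformly in $x$, exactly as in \eqref{eq:Gaussian-damping}.

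The change of measure now contributes $d\lambda\,dt=x^2h\,d\hat\lambda\,d\hat t$: the factor $h$ is the semiclassical order (the $h$ in $h\Psischh$), and the factor $x^2$ is precisely the scattering weight $x^{-l}$ with $l=-2$, while all the remaining ingredients ($\alpha$, the $\Gamma^{(j)}$, the cutoff, the rescaled weight) are smooth in $x$ up to $x=0$ and contribute no further powers of $x$; since $xD_x$ preserves smoothness in $x$ and leaves the $O(x^2)$ prefactor $O(x^2)$, conormality in $x$ up to the boundary is automatic. It then remains to repeat the three-part analysis of the boundaryless proof: (i) for $|\hat t|<2$, apply the parameter-dependent stationary phase lemma in the two variables $(\hat t,\theta^\parallel)$ with the same nondegenerate Hessian (up to the $O(xh^{1/2})$ corrections), getting a symbol of order $-1$ in $(\xisc,\etasc)$, uniformly and conormally in $(x,y)$ up to $x=0$; (ii) for $1\le|\hat t|$ with $|t|<T_0$ small, run the non-stationary phase argument — the extra terms, written in powers of $t$, being small — to get rapid decay; (iii) for $t\in[T_0,T]$, where one may need separate charts for $z$ and $z'$, use the no-conjugate-points hypothesis (nonstationarity of the phase in $(t,\lambda,\omega)$) together with the bound $e^{-\ep/(xh)}$ on the weight away from $x=0$, which is $O(h^\infty)$ and rapidly decaying as $x\to 0$, to place those contributions in $h^\infty\Psischh^{-\infty,-\infty}(\tilde M_c;\cF)$. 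Combining, $a_h$ is $x^2h$ times a standard semiclassical symbol of order $-1$ in $(\xisc,\etasc)$ with conormal $x$-dependence, i.e.\ $A_h\in h\Psischh^{-1,-2}(\tilde M_c;\cF)$.

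The main obstacle I anticipate is the uniformity all the way to $x=0$: one must check that, after the rescaling, the entire integrand — weight times amplitude times oscillatory factor — is a smooth, Schwartz-in-$\hat t$, symbolic-in-$(\xisc,\etasc)$ family that is conormal in $x$ up to $x=0$, and that the stationary and non-stationary phase estimates hold with constants uniform in $x$. The essential ingredient is the exact cancellation of the $x^{-1}/h$ singularity in $h^{-1}(\Phi(\gamma^{(1)})-\Phi(x))$, dictated by the choice $\Phi(x)=x^{-1}$ matching the scattering structure, supplemented by the lower bound of Lemma~\ref{lemma:gamma-1-est}, which confines the (length $O(|\lambda|)=O(xh^{1/2})$) interval on which $\gamma^{(1)}$ can dip below $x$ and thereby keeps the Gaussian damping, and hence all the symbolic estimates, uniform down to $x=0$.
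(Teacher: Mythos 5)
Your proposal is correct and follows essentially the same route as the paper: the rescaling $\hat t=t/(xh^{1/2})$, $\hat\lambda=\lambda/(xh^{1/2})$, the observation that with $\xisc=x^2\xi$, $\etasc=x\eta$ the rescaled phase and damping factor take precisely the same form as in the boundaryless case with $xh^{1/2}$ playing the role of $h^{1/2}$, the density factor $x^2h$ accounting for the order $(-1,-2)$ and the overall $h$, and the same three-regime stationary/non-stationary phase analysis for $|\hat t|<2$, $1\le|\hat t|$ with $|t|<T_0$, and $t\in[T_0,T]$ with the $e^{-\ep/(xh)}$ bound. You make the positivity of $\gamma^{(1)}$ (and hence the well-definedness and Gaussian domination of the weight) more explicit than the paper does, via Lemma~\ref{lemma:gamma-1-est} and $\lambda^2/(2C)<x$ for $\hat\lambda$ in a compact set and $h$ small; this is a useful clarification of a step the paper takes for granted.
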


\begin{proof}
  We change the variables of integration to $\hat t=t/(h^{1/2} x)$, and
$\hat\lambda=\lambda/(h^{1/2}x)$, so again the $\hat\lambda$ integral is in fact over
a fixed compact interval, but the $\hat t$ one is over $|\hat t|<T/(x\sqrt{h})$
which grows as $h\to 0$ or $x\to 0$.
We get that the
phase is
\begin{equation*}\begin{aligned}
&\xisc(\hat\lambda\hat t+\alpha\hat t^2+xh^{1/2}\hat
t^3\Gamma^{(1)}(x,y,xh^{1/2}\hat\lambda,\omega,xh^{1/2}\hat t))\\
&\qquad+\etasc\cdot(\omega\hat t+xh^{1/2}\hat t^2\Gamma^{(2)}(x,y,h^{1/2}\hat\lambda,\omega,xh^{1/2}\hat t)),
\end{aligned}\end{equation*}
while the exponential damping factor (which we regard as a Schwartz
function, part of the amplitude, when one regards $\hat t$ as a
variable on $\RR$) is
\begin{equation*}\begin{aligned}
&-1/(hx)+1/(h\gamma^{(1)}_{x,y,\lambda,\omega}(t))\\
&=
-h^{-1}(\lambda t+\alpha
t^2+t^3\Gamma^{(1)}(x,y,\lambda,\omega,t))x^{-1}(x+\lambda t+\alpha
t^2+t^3\Gamma^{(1)}(x,y,\lambda,\omega,t))^{-1}\\
&=-(\hat\lambda\hat t+\alpha\hat
t^2+\hat t^3 xh^{1/2}\hat\Gamma^{(1)}(x,y,xh^{1/2}\hat\lambda,\omega,xh^{1/2}\hat t)),
\end{aligned}\end{equation*}
with $\hat\Gamma^{(1)}$ a smooth function. Thus, for $\xi,\eta$ in a
bounded region we conclude that $a_h$ is a $\CI$
function. Furthermore, we observe that with $(\xisc,\etasc)$ in place
of $(\xi,\eta)$, and in the new integration variables $\hat t$ and
$\hat\lambda$, \eqref{eq:semicl-sc-full-symbol} has the same form as
\eqref{eq:semicl-full-symbol}, so identical stationary phase arguments
are applicable. In particular, the $t$ bounded away from $0$ case
proceeds analogously with $xh^{1/2}$ playing the role of $h^{1/2}$, keeping
in mind that the exponential weight is bounded by $e^{-\ep/(xh)}$ for
$t$ bounded away from $0$, so
is rapidly decaying in $xh$; in this case
$|\xisc|/(xh^{1/2})>C_0|\etasc|$ assures the possibility of
$t$-integration by parts to obtain rapid decay in $xh$, while if
$|\widetilde\xisc|=|\xisc|/(xh^{1/2})<2C_0|\etasc|$, then one can apply a
standard no-stationary point argument as above under the no-conjugate
point assumption since the phase is $x^{-1}h^{-1/2}$ times a usual
homogeneous degree 1 phase in $(\widetilde\xisc,\etasc)$, giving rapid
decay in $x^{-1}h^{-1/2}|\etasc|$, thus in $x^{-2}h^{-1}|\xisc|$ as
well.
\end{proof}

Finally, it remains to compute the semiclassical foliation scattering
principal symbol, which is, taking into account the density factor
$hx^2$ from the change of variables,
\begin{equation}\begin{aligned}\label{eq:finite-points-sc-symbol}
    a_0(x,y,\xisc,\etasc)=hx^2\int &e^{i(\xisc(\hat\lambda\hat t+\alpha\hat t^2)+\etasc\cdot\omega\hat
t)} e^{-(\hat\lambda\hat t+\alpha\hat
t^2)}\chi(\hat\lambda)\,d\hat t\,d\hat \lambda\,d\omega.
\end{aligned}\end{equation}
Then completely analogously to the above computation yields that as $|(\xisc,\etasc)|\to\infty$, up to an
overall elliptic factor, we have
$$
\int_{\sphere^{n-2}}\chi(\hat\lambda(\theta^\perp)) \,d\theta^\perp,
$$
which is elliptic for $\chi\geq 0$ with $\chi(0)>0$. Further, the
ellipticity at finite points follows the same computation as above,
for the same choice of $\chi$, $\chi(\hat\lambda)=e^{-\hat\lambda^2/(2\alpha)}$,
with $(\xi,\eta)$ replaced by $(\xisc,\etasc)$.  Again, in view
of the errors of the elliptic parametrix construction being small in the semiclassical Sobolev spaces as
discussed in Section~\ref{sec:semicl}, {\em this proves Theorem~\ref{thm:main-sc}.}

\bibliographystyle{plain}
\bibliography{sm}
\end{document}